\documentclass[a4paper,11pt]{amsart}

\usepackage{mathptmx,amssymb,amscd,latexsym,eulervm}
\usepackage{amsmath}
\usepackage{amsthm}
\usepackage{amsfonts}
\usepackage{mathdots}
\usepackage{setspace}
\usepackage{paralist}
\usepackage{aliascnt}
\usepackage[initials,lite]{amsrefs}
\usepackage[inner=2.4cm,outer=2.4cm,bottom=3.2cm]{geometry}
\usepackage{xcolor}
\definecolor{citepink}{HTML}{AA3377}
\usepackage[
  colorlinks=true,
  citecolor=citepink,
  linkcolor=blue,
  urlcolor=blue
]{hyperref}

\allowdisplaybreaks
\BibSpec{collection.article}{%
  +{}  {\PrintAuthors}                {author}
  +{,} { \textit}                     {title}
  +{.} { }                            {part}
  +{:} { \textit}                     {subtitle}
  +{,} { \PrintContributions}         {contribution}
  +{,} { \PrintConference}            {conference}
  +{}  {\PrintBook}                   {book}
  +{,} { }                            {booktitle}
  +{,} { }                            {series}
  +{, vol.} { }                       {volume}
  +{,} { }                            {publisher}
  +{,} { \PrintDateB}                 {date}
  +{,} { pp.~}                        {pages}
  +{,} { }                            {status}
  +{,} { \PrintDOI}                   {doi}
  +{,} { available at \eprint}        {eprint}
  +{}  { \parenthesize}               {language}
  +{}  { \PrintTranslation}           {translation}
  +{;} { \PrintReprint}               {reprint}
  +{.} { }                            {note}
  +{.} {}                             {transition}
  +{}  {\SentenceSpace \PrintReviews} {review}
}

\AtBeginDocument{\def\MR#1{}}

\makeatletter
\@namedef{subjclassname@2020}{\textup{2020} Mathematics Subject Classification}
\makeatother

\newtheorem{theorem}{Theorem}[section]
\newtheorem*{theoremA}{Theorem A}
\newtheorem*{corollaryB}{Corollary B}
\newtheorem*{theoremC}{Theorem C}
\newaliascnt{proposition}{theorem}
\newtheorem{proposition}[proposition]{Proposition}
\aliascntresetthe{proposition}

\newaliascnt{corollary}{theorem}
\newtheorem{corollary}[corollary]{Corollary}
\aliascntresetthe{corollary}

\newaliascnt{lemma}{theorem}
\newtheorem{lemma}[lemma]{Lemma}
\aliascntresetthe{lemma}

\newaliascnt{question}{theorem}

\aliascntresetthe{question}

\theoremstyle{remark}
\newaliascnt{remark}{theorem}
\newtheorem{remark}[remark]{Remark}
\aliascntresetthe{remark}

\theoremstyle{definition}
\newtheorem{example}[theorem]{Example}

\DeclareMathOperator{\Hom}{Hom}

\DeclareMathOperator{\Soc}{Soc}
\DeclareMathOperator{\tr}{tr}
\DeclareMathOperator{\supp}{supp}
\DeclareMathOperator{\type}{type}
\DeclareMathOperator{\edim}{edim}

\newcommand{\kk}{\Bbbk}
\newcommand{\NN}{\mathbb N}
\newcommand{\ZZ}{\mathbb Z}
\newcommand{\m}{\mathfrak m}

\newcommand{\F}{\mathcal F}

\newcommand{\ee}{\mathbf e}

\title[Canonical traces of Artinian truncations]{Canonical Traces of Artinian Truncations of Stanley--Reisner Rings}

\author{Sora Miyashita}
\address[Miyashita]{Department of Pure and Applied Mathematics, Graduate School of Information Science and Technology, The University of Osaka, Suita, Osaka 565-0871, Japan}
\email{u804642k@ecs.osaka-u.ac.jp}

\date{\today}
\keywords{canonical module, canonical trace, nearly Gorenstein ring, ring of Teter type, Teter number, Artinian monomial algebra, Stanley--Reisner ideal, flag complex, maximal-ideal power}
\subjclass[2020]{Primary 13H10, 13M05; Secondary 05E40, 13A02}

\hypersetup{
  pdftitle={Canonical Traces of Artinian Truncations of Stanley--Reisner Rings},
  pdfauthor={Sora Miyashita},
  pdfsubject={Commutative algebra},
  pdfkeywords={canonical module, canonical trace, nearly Gorenstein ring, ring of Teter type, Teter number, Artinian monomial algebra, Stanley--Reisner ideal, flag complex, maximal-ideal power}
}

\begin{document}

\begin{abstract}
For a simplicial complex $\Delta$ and integers $n_i\ge 2$, set
\[
A_{\Delta,\mathbf n}=\kk[x_1,\ldots,x_m]/\bigl(I_\Delta+(x_1^{n_1},\ldots,x_m^{n_m})\bigr).
\]
We give an exact combinatorial formula for the canonical trace for arbitrary truncation exponents and for an arbitrary simplicial complex after deleting irrelevant ghost vertices. The formula extends the free-face formula of Gasanova--Herzog--Hibi--Moradi for square-zero flag face algebras and recovers, in the simplex-boundary case, a special case of their formula for monomial almost complete intersections. As a first consequence, we classify the nearly Gorenstein algebras in this family: on each connected component $C$ of $\Delta^{(1)}$, the induced complex is either the simplex $2^C$, with arbitrary exponents, or the boundary $\partial2^C$, with every exponent equal to two. We also compute the Teter number on this nearly Gorenstein locus. For flag complexes the trace is generated by the free-face monomials for arbitrary exponents, and we characterize the equalities $\tr_A(\omega_A)=\m_A^q$. In the square-zero one-dimensional case we isolate the additional contribution coming from triangle components.
\end{abstract}

\maketitle
\enlargethispage{9pt}

\section{Introduction}

For a finite module $M$ over a Noetherian ring $R$, its trace is the sum of the images of all homomorphisms $M\to R$; see~\cite{Lindo}. If $R$ is Cohen--Macaulay with canonical module $\omega_R$, the canonical trace detects the non-Gorenstein locus and leads to the nearly Gorenstein property~\cite{HHS}. The Artinian theory goes back to Teter and to Huneke--Vraciu; more recent work develops canonical traces and Teter numbers for zero-dimensional monomial algebras~\cites{Teter,HV,SV,GHHM}, while canonical traces of fiber products are treated in~\cite{KM}. Further developments on canonical traces may be found in~\cites{DKT,HMP,FHST,Ficarra,HibiStamate,HallKMM,MiySemi,MV,Kimura}.

We study
\[
A_{\Delta,\mathbf n}=\kk[x_1,\ldots,x_m]/\bigl(I_\Delta+(x_1^{n_1},\ldots,x_m^{n_m})\bigr),\qquad n_i\ge2.
\]
This family was introduced and studied from the viewpoint of inverse systems, socles, and levelness by Van Tuyl--Zanello~\cite{VTZ}. It is a natural Artinian truncation of the Stanley--Reisner ring $\kk[\Delta]$: the pure powers bound the exponents of standard monomials, while $\Delta$ records their allowed supports.

The closest predecessor for the trace problem is~\cite{GHHM}. Their general theory expresses the canonical trace of an Artinian monomial algebra as a sum of symmetric monomial ideals. For the square-zero face algebra
\[
\kk\{\Delta\}=\kk[x_1,\ldots,x_m]/\bigl(I_\Delta+(x_1^2,\ldots,x_m^2)\bigr),
\]
they obtain an explicit free-face formula when $\Delta$ is flag~\cite{GHHM}*{Theorem~5.1}; they also compute the trace of monomial almost complete intersections~\cite{GHHM}*{Theorem~4.1}. The purpose of this paper is to turn their symmetric-ideal description into a closed formula for the whole family $A_{\Delta,\mathbf n}$, without assuming that $\Delta$ is flag or pure and without imposing a Cohen--Macaulay or characteristic hypothesis. It also recovers the squarefree boundary case of their monomial almost-complete-intersection formula, so these previously known computations fit naturally into the same combinatorial description.

We now state the main formula. For a nonempty family $\mathcal C\subseteq\F(\Delta)$, put
\[
U_{\mathcal C}=\bigcup_{F\in\mathcal C}F,
\qquad I_{\mathcal C}=\bigcap_{F\in\mathcal C}F.
\]
For $K\subseteq I_{\mathcal C}$ and $F\in\mathcal C$, set $D_F(\mathcal C,K)=K\cup(U_{\mathcal C}\setminus F)$. We call $(\mathcal C,K)$ \emph{admissible} if every $D_F(\mathcal C,K)$ is a face and the facets containing at least one of these faces are exactly the facets in $\mathcal C$. Equivalently,
\[
\mathcal C=\{G\in\F(\Delta):D_F(\mathcal C,K)\subseteq G\text{ for some }F\in\mathcal C\}.
\]
For such a pair, set
\[
J_{\mathcal C,K}=\left(x_K\prod_{j\in U_{\mathcal C}\setminus F}x_j^{n_j-1}:F\in\mathcal C\right)A_{\Delta,\mathbf n}.
\]

\begin{theoremA}
For every finite simplicial complex $\Delta$ without ghost vertices and every choice of exponents $n_i\ge2$,
\[
\tr_{A_{\Delta,\mathbf n}}(\omega_{A_{\Delta,\mathbf n}})
=\sum_{(\mathcal C,K)\,\mathrm{admissible}}J_{\mathcal C,K}.
\]
\end{theoremA}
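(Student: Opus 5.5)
I would start from the general description in \cite{GHHM} of the canonical trace of an Artinian monomial algebra $A=S/I$ as a sum of symmetric monomial ideals. Concretely, $\omega_A=\Hom_\kk(A,\kk)$ is $\ZZ^m$-graded with $\kk$-basis the duals $x^{-b}$ of the standard monomials $x^b$. Every homomorphism $\omega_A\to A$ is a sum of multihomogeneous ones. A multihomogeneous $\varphi$ of degree $d\in\ZZ^m$ sends $x^{-b}\mapsto\lambda_b x^{d-b}$. Linearity forces $\lambda_b$ to be constant along divisibility chains, and forces $x^c x^{d-b}=0$ in $A$ whenever $x^c x^{-b}=0$. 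Thus $\tr(\omega_A)$ is the sum, over $d$ and over admissible supports, of the monomial ideals generated by the elements $x^{d-b}$ with $b$ ranging over the socle degrees on which $\varphi$ is nonzero. So the whole task is to make this condition explicit for $A=A_{\Delta,\mathbf n}$.

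\textbf{Step 1: socle.} The socle of $A_{\Delta,\mathbf n}$ is spanned by the monomials $w_F=\prod_{j\in F}x_j^{n_j-1}$, one for each facet $F\in\F(\Delta)$. Since $\omega_A$ is generated by the $w_F^{-1}$, a multihomogeneous $\varphi$ of degree $d$ is determined by the family
\[
\mathcal C=\{F\in\F(\Delta):\varphi(w_F^{-1})\neq0\}
\]
and the scalars on it. Its image is generated by the monomials $x^{d-w_F}$ for $F\in\mathcal C$.

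\textbf{Step 2: shape of $d$.} Each $x^{d-w_F}$ with $F\in\mathcal C$ must be a nonzero standard monomial, so $d\ge w_F$. Hence $d_j\ge n_j-1$ for every $j\in U_{\mathcal C}$, and $d_j\le 0$ off $U_{\mathcal C}$, which forces $d_j=0$ there. Write $d_j=n_j-1+e_j$ with $e_j\ge0$ on $U_{\mathcal C}$. The standard-monomial condition forces $e_j=0$ for $j\notin I_{\mathcal C}$, since there $x_j^{d_j}$ must be a standard power. Set $K=\{j:e_j>0\}\subseteq I_{\mathcal C}$. Then the generators are $x^{e}\prod_{j\in U_{\mathcal C}\setminus F}x_j^{n_j-1}$, and they lie in the ideal generated by $x_K\prod_{U_{\mathcal C}\setminus F}x_j^{n_j-1}$. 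This reduces every component to the case $e=\mathbf 1_K$, which gives $J_{\mathcal C,K}$.

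\textbf{Step 3: well-definedness $\Leftrightarrow$ admissibility.} I would then check that the map with constant scalar $1$ on $\mathcal C$ is a well-defined $A$-module homomorphism exactly when $(\mathcal C,K)$ is admissible. First, each image monomial $x_K\prod_{U\setminus F}x_j^{n_j-1}$ must be nonzero, i.e. $D_F(\mathcal C,K)$ must be a face. Second, let $G$ be a facet with $G\supseteq D_F(\mathcal C,K)$ for some $F\in\mathcal C$. The compatibility condition "whenever $x^c$ kills $x^{-b}$, $x^c$ kills the image" and the requirement that $\lambda$ be constant along chains then force $G\in\mathcal C$. The reason is that a common lower element $x^{-b}$, with $b\le w_F,w_G$, has image reachable from both, and $w_G^{-1}$ maps onto it. Conversely, if $G\notin\mathcal C$ one must set $\lambda_{w_G}=0$, which contradicts the nonvanishing of the shared image. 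Verifying both directions of this equivalence carefully, including the choice of scalars on non-socle basis elements and possibly several connected pieces of $\mathcal C$, is the step I expect to be the main obstacle. There I would argue degree by degree on the basis elements $x^{-b}$, using that a monomial $x^{d-b}$ is nonzero iff its support is a face and its exponents are $<n_j$.

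\textbf{Step 4: conclusion.} Admissibility also guarantees that different $F\in\mathcal C$ give consistent values, so $J_{\mathcal C,K}\subseteq\tr(\omega_A)$. Combined with the reduction in Steps 1--2, this gives the reverse inclusion and hence the stated formula. Ghost vertices are excluded so that the socle description in Step 1 holds.
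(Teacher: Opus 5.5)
Your reduction to multihomogeneous maps is fine. The problem is indexing by $\mathcal C=\{F:\varphi(w_F^{-1})\neq0\}$: this does not yield $\tr_A(\omega_A)\subseteq\sum J_{\mathcal C,K}$, because both Step~2 and the forward half of Step~3 are false.

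Take the paper's running example, $\Delta$ with facets $\{1,2\},\{2,3\}$ and all $n_i=2$, and write $\varepsilon_F$ for your $x^{-\mathbf 1_F}$. Define $\varphi$ of multidegree $d=(1,1,1)$ by $\varepsilon_{\{1,2\}}\mapsto x_3$, $\varepsilon_{\{1\}}\mapsto x_2x_3$, and all other $\varepsilon_F\mapsto0$. It is $A$-linear: check $x_i\varphi(\varepsilon_F)=\varphi(x_i\varepsilon_F)$ directly, using only $x_1x_3=x_i^2=0$. Your $\mathcal C$ is $\{\{1,2\}\}$, yet $d_3=1$. The generator $x_3$ is not of the form $x^e\prod_{j\in U_{\mathcal C}\setminus F}x_j^{N_j}$ with $\supp e\subseteq I_{\mathcal C}$, and nothing forces $d_j\le0$ off $U_{\mathcal C}$.

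Even when $d$ has your shape, admissibility can fail. Take the $4$-cycle with facets $\{1,2\},\{1,3\},\{2,4\},\{3,4\}$, $n_i=2$, and $d=(1,1,1,1)$. Every assignment $\varepsilon_{\{a,b\}}\mapsto\lambda_{ab}\,x_{[4]\setminus\{a,b\}}$, with $0$ on $\varepsilon_\varnothing$ and the $\varepsilon_{\{i\}}$, is $A$-linear, since squares and all products of three distinct variables vanish. Take $\lambda_{34}=0$ and the other three equal to $1$. Then $U_{\mathcal C}=[4]$ and $K=\varnothing$, but $D_{\{1,2\}}(\mathcal C,\varnothing)=\{3,4\}$ lies in the facet $\{3,4\}\notin\mathcal C$. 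Your ``common lower element'' argument fails here: the only such element, $\varepsilon_\varnothing$, is sent to $0$, so no chain links $\lambda_{12}$ with $\lambda_{34}$.

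The underlying problem is that the minimal generators of $\operatorname{im}\varphi$ are governed by the socle monomials lying in the image, not by the facets on which $\varphi$ is nonzero. For an arbitrary multihomogeneous map these differ: in the first example the socle of $(x_3)$ is $x_2x_3$, i.e.\ the facet $\{2,3\}$. Moreover, a single image need not come from one admissible pair. In the second example it is $J_{\{\{1,3\}\},\{1,3\}}+J_{\{\{2,4\}\},\{2,4\}}+J_{\{\{3,4\}\},\{3,4\}}$, which is not symmetric.

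The missing idea is the input the paper takes from Gasanova--Herzog--Hibi--Moradi. By their Corollary~3.5 the trace is the sum of \emph{symmetric} monomial ideals. For a symmetric $J$, their Corollary~3.8 pairs the minimal generators $v_F$ with the socle monomials $u_F$ of $J$ itself so that $v_Fu_F$ is constant. The paper therefore defines $\mathcal C$ through $\Soc(J)$, and its step $\nu_i(v_F)=0$ for $i\notin U_{\mathcal C}$ uses precisely that every $v_F$ divides a socle monomial of $J$.

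For the reverse inclusion you still owe the choice you flagged. Put $d=\mathbf 1_K+\sum_{j\in U_{\mathcal C}}N_j\ee_j$ and $\lambda_b=1$ exactly when $x^b\in J_{\mathcal C,K}$. $A$-linearity is then the self-duality $x^b\leftrightarrow x^{d-b}$ of the monomials of $J_{\mathcal C,K}$, which is what the constant-product criterion supplies.
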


The general framework for canonical traces of zero-dimensional monomial algebras was developed by Gasanova--Herzog--Hibi--Moradi through symmetric monomial ideals. In the present setting, this framework can be made completely explicit. Our main result gives a combinatorial formula for the canonical trace of $A_{\Delta,\mathbf n}$ for arbitrary simplicial complexes and arbitrary truncation exponents. In particular, it removes both the flag and square-zero assumptions appearing in the previously known simplicial-complex formula, while also recovering the monomial almost-complete-intersection case as a special instance. The resulting description separates the facet combinatorics of $\Delta$ from the numerical truncation data $\mathbf n$, and it provides a convenient starting point for the nearly Gorenstein classification and the computation of Teter numbers carried out in the subsequent sections.

A first structural consequence is a complete nearly Gorenstein classification. For $C\subseteq[m]$, let $\Delta|_C=\{F\in\Delta:F\subseteq C\}$ be the induced subcomplex; write $2^C$ for the full simplex and $\partial2^C$ for its boundary.

\begin{corollaryB}
Let $\Delta$ be a finite simplicial complex on $[m]$ without ghost vertices, let $n_i\ge2$, and let $G=\Delta^{(1)}$ be its $1$-skeleton. Then $A_{\Delta,\mathbf n}$ is nearly Gorenstein if and only if every connected component $C$ of $G$ satisfies one of the following:
\begin{compactenum}[\rm(i)]
\item $\Delta|_C=2^C$;
\item $\Delta|_C=\partial2^C$ and $n_i=2$ for every $i\in C$.
\end{compactenum}
\end{corollaryB}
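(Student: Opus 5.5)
The plan is to deduce Corollary B from Theorem A by reading off the degree-one part of the trace. The algebra $A=A_{\Delta,\mathbf n}$ is nearly Gorenstein iff $\m_A\subseteq\tr_A(\omega_A)$. Since the trace is a monomial ideal, this holds iff each variable $x_i$ is divisible by one of the monomial generators of some $J_{\mathcal C,K}$. A generator $x_K\prod_{j\in U_{\mathcal C}\setminus F}x_j^{n_j-1}$ has degree $\le1$ only in the following three situations. (a) $U_{\mathcal C}=F$ and $K=\{i\}$: then $\mathcal C=\{F\}$, and admissibility says $F$ is the unique facet containing $i$. (a$'$) $U_{\mathcal C}=F$ and $K=\emptyset$: then $\emptyset$ lies in every facet, so $\Delta$ is a simplex and $A$ is Gorenstein. (b) $K=\emptyset$, $U_{\mathcal C}\setminus F=\{i\}$ and $n_i=2$. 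So the first step is this lemma: $x_i\in\tr_A(\omega_A)$ iff either $i$ lies in a unique facet, or there is an admissible pair $(\mathcal C,\emptyset)$ and $F\in\mathcal C$ with $U_{\mathcal C}\setminus F=\{i\}$ and $n_i=2$.

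For sufficiency I would exhibit the pairs directly. Suppose a component $C$ of $G$ has $\Delta|_C=2^C$. Then $C$ is a facet, and every $i\in C$ lies only in that facet, since any facet containing $i$ lies inside the component of $i$. So $(\{C\},\{i\})$ is admissible and gives $x_i$. Suppose instead $\Delta|_C=\partial2^C$ with $|C|\ge2$ (no ghost vertices) and all $n_i=2$. Take $\mathcal C=\{C\setminus\{j\}:j\in C\}$ and $K=\emptyset$. Then $U_{\mathcal C}=C$, and each $D_F$ is a single vertex of $C$. The facets containing some vertex of $C$ are exactly the facets of $\partial2^C$, so the pair is admissible. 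Its generators are exactly $x_j^{n_j-1}=x_j$ for $j\in C$.

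For necessity, fix a component $C$ and assume every $x_i$ with $i\in C$ lies in the trace. If every $i\in C$ lies in a unique facet, then for any edge $\{i,j\}$ the facets of $i$ and $j$ coincide. Connectivity then forces a single facet equal to $C$, so $\Delta|_C=2^C$. Otherwise some $i\in C$ lies in at least two facets, and the lemma produces an admissible $(\mathcal C,\emptyset)$ with $U=U_{\mathcal C}$ and $U\setminus F=\{i\}$. Since $D_F=\{i\}$, admissibility forces every facet containing $i$ to lie in $\mathcal C$, and hence inside $U$. Moreover every facet of $\mathcal C$ lies inside $U$, and the sets $U\setminus F'$ are faces.

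I expect the main obstacle to be upgrading this local information to the global statement $U=C$, $\Delta|_C=\partial2^C$, with all exponents in $C$ equal to $2$. My plan is to apply the lemma to every vertex of $C$, not just to $i$. First I would show that no vertex of $C$ can lie in a unique facet once some vertex lies in two facets. Then I would show that the families $\mathcal C$ attached to different vertices coincide, which would make $U$ closed under neighbours and hence $U=C$. Next I would use that each $U\setminus F'$ is a face with every facet containing it lying in $\mathcal C$, to force $|U\setminus F'|=1$ for all $F'\in\mathcal C$. Then every $(|C|-1)$-subset of $C$ is a facet, so $\Delta|_C=\partial2^C$. Finally, the condition $n_j=2$ comes from the lemma applied to each $j\in C$, since option (a) is unavailable for every vertex of $C$. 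If the direct approach stalls, I would first treat the case where $\mathcal C$ consists of all facets meeting $C$, and reduce to it via the admissibility equation $\mathcal C=\{G:D_F\subseteq G\}$.
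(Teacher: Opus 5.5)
Your degree-one lemma is correct, and so is the sufficiency half. Exhibiting the admissible pairs $(\{C\},\{i\})$ and $(\{C\setminus\{j\}:j\in C\},\varnothing)$ directly from Theorem~A is more self-contained than the paper, which invokes Proposition~\ref{prop:boundary-trace} and the fiber-product formula of Proposition~\ref{prop:fiber-product}.

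The gap is in necessity. After you choose $i\in C$ lying in two facets, the steps that carry the argument are only announced: that no vertex of $C$ lies in a unique facet, and that the families $\mathcal C$ attached to different vertices coincide. The second has no evident direct proof. The admissible pair witnessing $x_j\in\tr_A(\omega_A)$ is chosen separately for each $j$, and nothing you have established relates these choices. The mechanism you propose for $|U\setminus F'|=1$ also does not work. Once $\mathcal C$ contains every facet meeting $C$, the requirement that every facet containing $U\setminus F'$ lie in $\mathcal C$ is vacuous and forces nothing. The $(|C|-1)$-subsets would instead come from the facets $F_j$ with $U\setminus F_j=\{j\}$, but only after $U=C$ is known for every $j$, which is the unproved coincidence.

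The missing idea is a local consequence of your lemma that you stopped just short of: in either case, $N_G(i)\subseteq F$.
In case (a), every edge at $i$ lies in the unique facet $F$ containing $i$.
In case (b), you already noted that every facet containing $i$ lies in $U$. Since $U=F\cup\{i\}$, every neighbor of $i$ lies in $F$.

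Hence every open neighborhood in $G$ is a clique, and a connected graph with this property is complete. Indeed, on a shortest path $v_0,v_1,v_2,\ldots$ between nonadjacent vertices, $v_0,v_2\in N_G(v_1)$ forces the edge $\{v_0,v_2\}$. So $C\setminus\{i\}=N_G(i)$ is a face for every $i\in C$, and $\Delta|_C$ is $2^C$ or $\partial2^C$. This replaces your entire case analysis and is exactly the paper's argument.

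In the boundary case, connectedness and the absence of ghost vertices give $|C|\ge3$. Then every vertex lies in $|C|-1\ge2$ facets, so option (a) is unavailable, and your lemma yields $n_j=2$ on $C$. This last step legitimately avoids Proposition~\ref{prop:boundary-trace}, which the paper uses at this point.
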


The key point is that Theorem~A forces the open neighborhood of every vertex whose variable lies in the trace to be a face. In the nearly Gorenstein case this makes each connected component of the $1$-skeleton complete, leaving only the simplex and its boundary. The boundary case is then decided by~\cite{GHHM}*{Theorem~4.1}, and disconnected complexes are assembled by the fiber-product formula of~\cite{KM}.

Following~\cite{GHHM}, a \emph{canonical map} means an $A$-linear map $\omega_A\to A$, and the Teter number is the least number of canonical maps whose images generate the canonical trace. Let $\mathcal S(\Delta)$ be the set of connected components $C$ of $\Delta^{(1)}$ for which $\Delta|_C=2^C$.

\begin{theoremC}
Assume that $A_{\Delta,\mathbf n}$ is non-Gorenstein and nearly Gorenstein. Then its Teter number is
\[
\max\bigl(\{1\}\cup\{|C|:C\in\mathcal S(\Delta)\}\bigr).
\]
In particular, $A_{\Delta,\mathbf n}$ is of Teter type if and only if every simplex component is a singleton.
\end{theoremC}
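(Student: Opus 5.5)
We prove Theorem~C by reducing to the connected components and then computing there. Let $C_1,\dots,C_r$ be the connected components of $G=\Delta^{(1)}$. Then $A=A_{\Delta,\mathbf n}$ is the fiber product over $\kk$ of the algebras $A_i=A_{\Delta|_{C_i},\mathbf n|_{C_i}}$. By Corollary~B, each $A_i$ is either a complete intersection $\kk[x_j:j\in C_i]/(x_j^{n_j})$, when $\Delta|_{C_i}=2^{C_i}$, or the square-zero boundary algebra, when $\Delta|_{C_i}=\partial 2^{C_i}$ with all $n_j=2$.

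First we handle a single Gorenstein factor. If $r=1$, then $A$ is Gorenstein in the simplex case. In the boundary case $A$ is a monomial almost complete intersection, and the trace is $\m_A$ by~\cite{GHHM}*{Theorem~4.1}; its Teter number is $1$, which follows from the Teter/Huneke--Vraciu description or can be checked directly. So assume $r\ge2$, where $A$ is never Gorenstein (a nontrivial fiber product). We use the fiber-product trace formula of~\cite{KM} in its refined form, which describes $\Hom_A(\omega_A,A)$. For a fiber product $A=A_1\times_\kk\cdots\times_\kk A_r$ with all $A_i$ of positive embedding dimension, one has $\tr(\omega_A)=\m_A$ exactly when the stated conditions hold. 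The images of canonical maps then split as follows. A factor $A_i$ that is a complete intersection $\kk[x_j]/(x_j^{n_j})$, $j\in C_i$, contributes the variables $x_j$, but only through maps whose image is concentrated in a socle-like piece. Concretely, by Theorem~A with $\Delta|_C=2^C$ we have $U_{\mathcal C}=I_{\mathcal C}$. The admissible pairs coming from $C$ then force a generator of the form $x_j$ times the socle of the other factors, and each canonical map hits at most one new variable $x_j$ from that simplex in its image modulo $\m_A^2$.

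The upper bound is a construction. For each $t\le\max|C|$ we build one canonical map $\phi_t$ that sends the generator of $\omega_A$ corresponding to the $t$-th variable of every simplex component, and all boundary components, onto the corresponding $x$'s. This is possible because the duals of distinct components are independent inside $\omega_A=\bigoplus\omega_{A_i}$ modulo socle identifications, as in~\cite{KM}. Then $\sum_t\operatorname{im}\phi_t\supseteq(x_j:j)=\m_A$, which uses $\max(\{1\}\cup\{|C|\})$ maps. The boundary factors need only one map, by the $r=1$ case.

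The lower bound is the main obstacle. Suppose $\phi_1,\dots,\phi_s$ generate $\m_A$. Since the $x_j$ are minimal generators of $\m_A$, we pass modulo $\m_A^2$ and show that each $\phi$ contributes at most one dimension of $\m_A/\m_A^2$ from the span of $\{x_j:j\in C\}$ for a simplex component $C$. To do this we identify $\omega_{A_i}$ for $A_i$ Gorenstein as cyclic, generated by $u_i$. A canonical map $\phi$ restricted to the $A_i$-part sends $u_i$ to some element $a$. Since $\m_{A_i}u_i$ must map into a part annihilated by the other factors' maximal ideals, the images of elements of degree $0$ modulo $\m_A^2$ are spanned by the linear part of $a$, which is a single vector. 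Hence $s\ge|C|$. This yields Teter number $\max(\{1\}\cup\{|C|:C\in\mathcal S(\Delta)\})$, and the Teter-type criterion follows immediately.
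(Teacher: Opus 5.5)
The genuine gap is in your lower bound. You only control $\phi$ on the copy of $\omega_{A_C}$ inside $\omega_A$. Its image is $Aa$ with $a=\phi(\varepsilon_{\alpha(C)})$, and this is $\kk a$ modulo $\m_A^2$ once you know $a\in\m_A$; justify that by $\phi(\omega_A)\subseteq\tr_A(\omega_A)=\m_A$.

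But $\phi(\omega_A)$ also contains the elements $\phi(\varepsilon_\beta)$ with $\beta\neq0$ supported in other components. Nothing in your argument prevents these from having a nonzero linear part in the variables $x_j$, $j\in C$. Your sentence about $\m_{A_i}u_i$ mapping into something annihilated by the other maximal ideals does not address this.

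The missing step is the following. Since $\m_{A_C}\varepsilon_\beta=0$, the projection $\pi_C:A\to A_C$ gives $x_j\,\pi_C\phi(\varepsilon_\beta)=0$ for all $j\in C$. Hence $\pi_C\phi(\varepsilon_\beta)\in\Soc(A_C)=\kk x^{\alpha(C)}$, and this lies in $\m_{A_C}^2$ because $\deg x^{\alpha(C)}=\sum_{i\in C}N_i\ge|C|\ge2$. This is exactly where $|C|\ge2$ enters. Your argument never uses it, yet it is needed. In $A=\kk[x,y]/(x^2,xy,y^2)$, which has two singleton components, write $\varepsilon_x,\varepsilon_y$ for the duals of $x,y$. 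Then the $A$-linear map $\varepsilon_x\mapsto y$, $\varepsilon_y\mapsto x$, $\varepsilon_\varnothing\mapsto0$ sends a dual vector of one component onto the variable of the other. With the missing step added, each $\pi_C(\phi_k(\omega_A))$ spans at most one dimension of the $|C|$-dimensional space $\m_{A_C}/\m_{A_C}^2$, and $s\ge|C|$ follows.

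Two smaller points. First, your upper-bound construction is the right one: on each simplex component $C$, send $\varepsilon_{\alpha(C)}$ to its $t$-th variable $x_j$; on each boundary component, use the complement map $\lambda_C(\varepsilon_F)=x_{C\setminus F}$. However, ``modulo socle identifications'' hides the one check that makes extension by zero well defined. Each component map must kill the dual $\varepsilon_\varnothing$ of $1$, which is shared by all components of $\omega_A$. It does, because $\varepsilon_\varnothing=x^{\alpha(C)}\varepsilon_{\alpha(C)}\mapsto x^{\alpha(C)}x_j=0$ and $\lambda_C(\varepsilon_\varnothing)=x_C=0$, but you should say so.

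Second, drop your third paragraph. Theorem~A describes $\tr_A(\omega_A)$ as a sum of ideals and gives no information about the image of a single canonical map. Also, ``$x_j$ times the socle of the other factors'' is zero in $A$.
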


The Teter-number formula is sensitive to the decomposition into connected components and is usually smaller than the minimal number of generators of the trace. We give explicit examples after its proof. For flag complexes, Theorem~A shows that the free-face formula of~\cite{GHHM}*{Theorem~5.1} remains valid for arbitrary truncation exponents, and this yields a complete criterion for $\tr_A(\omega_A)=\m_A^q$. We conclude by describing the extra non-flag contribution in the one-dimensional square-zero case. The classification in Corollary~B contrasts with the untruncated Stanley--Reisner case, where paths and discrete complexes give non-Gorenstein nearly Gorenstein examples~\cites{MiyLevel,MV}.

The paper is organized as follows. Section~\ref{sec:conventions} fixes the conventions for canonical traces, Teter numbers, and simplicial complexes. Section~\ref{sec:canonical} records the standard monomial basis, the canonical module and socle of $A_{\Delta,\mathbf n}$, and the fiber-product decomposition along the connected components of $\Delta^{(1)}$. In Section~\ref{sec:trace-formula} we recall the symmetric-ideal criterion of~\cite{GHHM}, introduce admissible pairs, and prove Theorem~A. Section~\ref{sec:classification} derives Corollary~B, the complete nearly Gorenstein classification. Section~\ref{sec:teter} proves Theorem~C and gives concrete examples illustrating the Teter-number formula. In Section~\ref{sec:powers} we specialize the trace formula to flag complexes and characterize when the canonical trace is a power of the maximal ideal. Finally, Section~\ref{sec:exact} treats one-dimensional square-zero complexes and isolates the additional contribution coming from triangle components.

\section{Conventions and traces}\label{sec:conventions}

This section fixes notation for traces, canonical modules, and simplicial complexes. We also record the fine-graded reduction used later in the one-dimensional argument.

Throughout, $\kk$ is a field and all rings are commutative Noetherian rings. We use standard terminology for Cohen--Macaulay and Gorenstein rings as in~\cite{BH}*{Chapter~3}. A \emph{positively graded} $\kk$-algebra is a finitely generated algebra $R=\bigoplus_{j\ge0}R_j$ with $R_0=\kk$; it is \emph{standard graded} if generated by $R_1$, and its graded maximal ideal is $\m_R=\bigoplus_{j>0}R_j$. Our shift convention is $M(a)_j=M_{a+j}$.

If a $d$-dimensional Cohen--Macaulay graded algebra $R$ is presented as a quotient of $S=\kk[z_1,\ldots,z_q]$, with all $z_i$ of positive degree, then
\[
\omega_R=\operatorname{Ext}^{q-d}_S\!\left(R,S\!\left(-\sum_i\deg z_i\right)\right)
\]
denotes its graded canonical module. For a finitely generated $R$-module $M$, the trace ideal is $\tr_R(M)=\sum_{\varphi\in\Hom_R(M,R)}\varphi(M)$. The trace of $\omega_R$ is the canonical trace, and $R$ is nearly Gorenstein if $\m_R\subseteq\tr_R(\omega_R)$. By~\cite{HHS}*{Lemma~2.1}, an Artinian local ring $A$ satisfies $\tr_A(\omega_A)=A$ exactly when it is Gorenstein. Thus a non-Gorenstein Artinian algebra is nearly Gorenstein exactly when $\tr_A(\omega_A)=\m_A$.

Following~\cite{GHHM}, the \emph{Teter number} of an Artinian algebra $A$ is the smallest integer $s$ for which there are $A$-linear maps $\varphi_1,\ldots,\varphi_s:\omega_A\to A$ satisfying
\[
\tr_A(\omega_A)=\varphi_1(\omega_A)+\cdots+\varphi_s(\omega_A).
\]
A non-Gorenstein algebra with Teter number one is \emph{of Teter type}. Unless explicitly stated otherwise, these maps are arbitrary $A$-linear maps; they are not required to be graded or multigraded.

For an Artinian positively graded algebra $A$, one has $\omega_A=A^\vee=\Hom_\kk(A,\kk)$ with $(a\lambda)(b)=\lambda(ab)$. Its socle is $\Soc(A)=0:_A\m_A$, its type is $\type(A)=\dim_\kk\Soc(A)$, and its embedding dimension is $\edim(A)=\dim_\kk\m_A/\m_A^2$. It is level if the socle is concentrated in one degree. An Artinian local ring is Gorenstein exactly when its type is one.

A simplicial complex $\Delta$ on $[m]=\{1,\ldots,m\}$ is closed under taking subsets and contains $\varnothing$. Its maximal faces are its facets, denoted $\F(\Delta)$; a vertex $i$ is a ghost vertex if $\{i\}\notin\Delta$. For $C\subseteq[m]$, write $\Delta|_C=\{F\in\Delta:F\subseteq C\}$, $2^C$ for the full simplex, and $\partial2^C=\{F:F\subsetneq C\}$ for its boundary. The $1$-skeleton $\Delta^{(1)}$ is the graph on the non-ghost vertices whose edges are the two-element faces of $\Delta$. A face $F$ is \emph{free} if it is contained in a unique facet; in particular, every facet is free. The complex is \emph{flag} if every clique in $\Delta^{(1)}$ is a face.

For $F\subseteq[m]$, put $x_F=\prod_{i\in F}x_i$. The Stanley--Reisner ideal and ring are
\[
I_\Delta=(x_F:F\notin\Delta),\qquad \kk[\Delta]=\kk[x_1,\ldots,x_m]/I_\Delta.
\]
A nonzero residue class of a monomial is called a standard monomial. An Artinian monomial quotient carries the fine $\ZZ^m$-grading $\deg x_i=\ee_i$. If $x^\alpha$ is a standard monomial, $\varepsilon_\alpha\in A^\vee$ denotes its dual basis vector, of multidegree $-\alpha$.

\begin{lemma}\label{lem:multigraded-reduction}
Let $A$ be an Artinian monomial algebra and let $x^\beta$ be a standard monomial. If $x^\beta\in\tr_A(\omega_A)$, then there exist a multihomogeneous map $\varphi:\omega_A\to A$ and a standard monomial $x^\alpha$ such that $\varphi(\varepsilon_\alpha)=x^\beta$.
\end{lemma}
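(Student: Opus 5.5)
The idea is to use the fine $\ZZ^m$-grading and decompose arbitrary maps into multihomogeneous components. Both $A$ and $\omega_A=A^\vee$ are finitely generated $\ZZ^m$-graded $A$-modules, and $\omega_A$ is finitely presented. Hence $\Hom_A(\omega_A,A)=\bigoplus_{\gamma\in\ZZ^m}\Hom_A(\omega_A,A)_\gamma$: every $A$-linear map is a finite sum of multihomogeneous maps of various multidegrees $\gamma$. This decomposition can also be checked directly, since $\omega_A$ is finite-dimensional with the basis $\{\varepsilon_\alpha\}$. Given $\varphi$, define $\varphi_\gamma(\varepsilon_\alpha)$ as the component of $\varphi(\varepsilon_\alpha)$ of multidegree $\gamma-\alpha$. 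One then verifies $A$-linearity of $\varphi_\gamma$ by comparing multidegree components in $\varphi(x_i\varepsilon_\alpha)=x_i\varphi(\varepsilon_\alpha)$, using that multiplication by $x_i$ shifts degrees by $\ee_i$.

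It follows that $\tr_A(\omega_A)$ is a sum of images of multihomogeneous maps, and each such image is a monomial ideal. Indeed, the image is spanned over $\kk$ by the elements $a\varphi(\varepsilon_\alpha)$. Moreover, $\varphi(\varepsilon_\alpha)$ is homogeneous of multidegree $\gamma-\alpha$, and every nonzero homogeneous element of $A$ is a scalar multiple of a single standard monomial, since each multidegree piece of a monomial quotient has dimension at most one.

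Now suppose $x^\beta\in\tr_A(\omega_A)$. Then $x^\beta$ lies in a sum of monomial ideals $\sum_\gamma\varphi_\gamma(\omega_A)$, which is itself monomial with $\kk$-basis the standard monomials it contains. So $x^\beta$ lies in a single $\varphi_\gamma(\omega_A)$, and that image is spanned by monomials $x^\delta c\,x^{\gamma-\alpha}$. Comparing multidegrees, we get $x^\beta=c^{-1}x^{\delta}\varphi_\gamma(\varepsilon_\alpha)$ for some $\alpha$ and some monomial $x^\delta$ with $\delta+\gamma-\alpha=\beta$. By linearity this equals $\varphi_\gamma(c^{-1}x^\delta\varepsilon_\alpha)$.

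It remains to realize this as the value on a single basis vector. In $A^\vee$ we have $x^\delta\varepsilon_\alpha=\varepsilon_{\alpha-\delta}$ if $\delta\le\alpha$ componentwise, and $x^\delta\varepsilon_\alpha=0$ otherwise. The latter is impossible because the value $x^\beta$ is nonzero, and $x^{\alpha-\delta}$ is standard as a divisor of a standard monomial. So setting $\varphi=c^{-1}\varphi_\gamma$ and replacing $\alpha$ by $\alpha-\delta$ gives $\varphi(\varepsilon_{\alpha-\delta})=x^\beta$.

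The main technical point is the first step, the graded decomposition of $\Hom$ and the $A$-linearity of the components. I expect this to be routine from finite presentation, or from the explicit check described above.
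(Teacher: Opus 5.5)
Your proof is correct and follows the same route as the paper. The paper invokes Corollary~2.7 of Gasanova--Herzog--Hibi--Moradi (the trace is the sum of images of multihomogeneous maps) and the one-dimensionality of the fine-graded components of $A$ and $\omega_A$; you verify by hand both the multigraded decomposition of $\Hom_A(\omega_A,A)$ and the extraction of a single basis vector $\varepsilon_\alpha$, which is exactly what the paper leaves to that citation.
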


\begin{proof}
This is immediate from~\cite{GHHM}*{Corollary~2.7}: the trace is the sum of the images of multihomogeneous maps, and every nonzero fine-graded component of $A$ and $\omega_A$ is one-dimensional.
\end{proof}

\section{Power-truncated Stanley--Reisner algebras}\label{sec:canonical}

This section records the standard monomials, canonical module, socle, and connected-component decomposition of $A_{\Delta,\mathbf n}$. These elementary descriptions are the input for all later trace computations.

For Sections~\ref{sec:canonical}--\ref{sec:exact}, let $\Delta$ be a finite simplicial complex on $[m]$ without ghost vertices, let $n_i\ge2$, put $N_i=n_i-1$, and set
\[
A=A_{\Delta,\mathbf n}=\kk[x_1,\ldots,x_m]/\bigl(I_\Delta+(x_1^{n_1},\ldots,x_m^{n_m})\bigr).
\]
Deleting ghost vertices only removes variables whose residue classes are already zero, so it causes no loss of generality.

For $\alpha=(\alpha_1,\ldots,\alpha_m)\in\NN^m$, write $x^\alpha=x_1^{\alpha_1}\cdots x_m^{\alpha_m}$ and $\supp(\alpha)=\{i:\alpha_i>0\}$. In the square-zero case $n_1=\cdots=n_m=2$, write $\mathbf1_F\in\{0,1\}^m$ for the incidence vector of $F$ and abbreviate $\varepsilon_F=\varepsilon_{\mathbf1_F}$.

\begin{proposition}\label{prop:canonical-module}
The standard monomials of $A$ are
\[
\mathcal B_\Delta(\mathbf n)=\{x^\alpha:0\le \alpha_i\le N_i\text{ for all }i,\ \supp(\alpha)\in\Delta\}.
\]
If $\varepsilon_\alpha$ is dual to $x^\alpha$, then
\[
\omega_A=\bigoplus_{x^\alpha\in\mathcal B_\Delta(\mathbf n)}\kk\varepsilon_\alpha,
\qquad
x_i\varepsilon_\alpha=
\begin{cases}
\varepsilon_{\alpha-\ee_i},&\alpha_i>0,\\
0,&\alpha_i=0.
\end{cases}
\]
For $F\in\F(\Delta)$, let $\alpha(F)_i=N_i$ for $i\in F$ and $\alpha(F)_i=0$ otherwise. Then
\[
\omega_A=\sum_{F\in\F(\Delta)}A\varepsilon_{\alpha(F)},
\qquad
\Soc(A)=\bigoplus_{F\in\F(\Delta)}\kk x^{\alpha(F)}.
\]
Consequently, $\type(A)=|\F(\Delta)|$; the algebra is Gorenstein exactly when $\Delta=2^{[m]}$, and it is level exactly when $\sum_{i\in F}N_i$ is independent of $F\in\F(\Delta)$.
\end{proposition}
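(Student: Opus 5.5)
The plan is to verify each claim directly from the monomial structure of the defining ideal.

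\emph{Standard monomials.} The ideal $I=I_\Delta+(x_1^{n_1},\ldots,x_m^{n_m})$ is a monomial ideal, so the residue classes of the monomials outside $I$ form a $\kk$-basis of $A$. A monomial $x^\alpha$ lies in $I$ exactly when it is divisible by some $x_i^{n_i}$, i.e.\ $\alpha_i\ge n_i$ for some $i$, or by some $x_F$ with $F\notin\Delta$, i.e.\ $\supp(\alpha)\notin\Delta$ (using that $\Delta$ is closed under subsets, so $x_F\mid x^\alpha$ for some nonface $F$ iff $\supp(\alpha)$ is a nonface). Taking complements yields $\mathcal B_\Delta(\mathbf n)$.

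\emph{Canonical module and its generators.} Since $A$ is Artinian, we use $\omega_A=A^\vee$ from Section~\ref{sec:conventions}, with dual basis $\varepsilon_\alpha$. Then $(x_i\varepsilon_\alpha)(x^\gamma)=\varepsilon_\alpha(x_ix^\gamma)$. This equals $1$ exactly when $x_ix^\gamma=x^\alpha$ is standard, i.e.\ $\gamma=\alpha-\ee_i$ with $\alpha_i>0$; otherwise $x_ix^\gamma$ is $0$ or another basis monomial. Note that $x^{\alpha-\ee_i}$ is automatically standard because its support is contained in $\supp(\alpha)$. This gives the stated action.

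For generation, take any standard $x^\alpha$ and choose a facet $F\supseteq\supp(\alpha)$. Then $\alpha\le\alpha(F)$ componentwise, so
\[
x^{\alpha(F)-\alpha}\varepsilon_{\alpha(F)}=\varepsilon_\alpha
\]
by iterating the action formula, and hence $\omega_A=\sum_F A\varepsilon_{\alpha(F)}$.

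\emph{Socle.} The socle is a monomial subspace, since $\m_A$ is generated by the variables and the multiplication map is fine-graded. A standard $x^\alpha$ is socle exactly when $x_ix^\alpha=0$ for all $i$. For $i$ with $\alpha_i<N_i$, the product $x_ix^\alpha$ is nonzero iff $\supp(\alpha)\cup\{i\}\in\Delta$. Hence $x^\alpha$ is socle iff for every face $G\supsetneq$ properly extending $\supp(\alpha)$ by one vertex we fail, and for $i\in\supp(\alpha)$ we have $\alpha_i=N_i$. This forces $\supp(\alpha)$ to be a facet $F$ and $\alpha=\alpha(F)$. The empty facet case arises only if $\Delta=\{\varnothing\}$, which is excluded by the no-ghost convention unless $m=0$.

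Because $x^{\alpha(F)}$ has distinct multidegrees for distinct $F$, we get $\Soc(A)=\bigoplus_F\kk x^{\alpha(F)}$ and $\type(A)=|\F(\Delta)|$.

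\emph{Consequences.} $A$ is Gorenstein iff the type is one, i.e.\ $\Delta$ has a unique facet. With no ghost vertices, that facet is $[m]$, so $\Delta=2^{[m]}$. Levelness means all socle degrees $\deg x^{\alpha(F)}=\sum_{i\in F}N_i$ coincide.

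The only mildly delicate step is the socle characterization: it must use both that $\Delta$ is closed under subsets and that the exponent bounds $N_i\ge1$ allow raising each exponent independently.
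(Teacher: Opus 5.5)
Your proof is correct, but it takes a different route from the paper. The paper treats only the standard-monomial basis and the action $x_i\varepsilon_\alpha$ as immediate. For everything else (generation of $\omega_A$ by the $\varepsilon_{\alpha(F)}$, the socle, the type, and the Gorenstein and level criteria) it cites the inverse-system results of Van Tuyl--Zanello~\cite{VTZ}, translated from divided powers to the dual basis.

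You instead verify all of these directly from the fine grading:
\begin{compactenum}[(1)]
\item generation comes from the explicit identity $x^{\alpha(F)-\alpha}\varepsilon_{\alpha(F)}=\varepsilon_\alpha$ for a facet $F\supseteq\supp(\alpha)$;
\item the socle comes from the test $x_ix^\alpha=0$ for all $i$, which splits into $\alpha_i=N_i$ on $\supp(\alpha)$ and $\supp(\alpha)\cup\{i\}\notin\Delta$ off it.
\end{compactenum}
Your version is self-contained and shows exactly which inputs matter: $N_i\ge1$, closure of $\Delta$ under subsets, and one-dimensional fine-graded components. The paper's citation is shorter and places the statement in the inverse-system and levelness framework where this family was first studied.

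One sentence in your socle paragraph is garbled (``for every face $G\supsetneq$ \dots\ we fail''). What you mean, and what your argument actually proves, is that $\supp(\alpha)\cup\{i\}\notin\Delta$ for every $i\notin\supp(\alpha)$. By closure under subsets, this is equivalent to $\supp(\alpha)$ being a facet.
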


\begin{proof}
The standard monomials and the displayed action are immediate. The remaining assertions are the inverse-system, socle, type, Gorenstein, and level statements of~\cite{VTZ}*{Theorem~3.2, Corollary~3.4, and Theorem~4.1}, translated from divided powers to the dual basis $\varepsilon_\alpha$.
\end{proof}

For augmented graded $\kk$-algebras $B_1,\ldots,B_t$, write $B_1\times_\kk\cdots\times_\kk B_t=\{(b_1,\ldots,b_t):\overline b_1=\cdots=\overline b_t\in\kk\}$ for their fiber product over the common residue field.

\begin{proposition}\label{prop:fiber-product}
Let $C_1,\ldots,C_t$ be the connected components of $\Delta^{(1)}$, and put $A_j=A_{\Delta|_{C_j},\mathbf n|_{C_j}}$. Then $A\cong A_1\times_\kk\cdots\times_\kk A_t$. If $t\ge2$, identify $\m_A$ with $\bigoplus_j\m_{A_j}$ and put
\[
\tau_j=\begin{cases}
\tr_{A_j}(\omega_{A_j}),&A_j\text{ is not Gorenstein},\\
\m_{A_j},&A_j\text{ is Gorenstein}.
\end{cases}
\]
Then $\tr_A(\omega_A)=\bigoplus_{j=1}^t\tau_j$. Consequently, $A$ is nearly Gorenstein if and only if every $A_j$ is nearly Gorenstein.
\end{proposition}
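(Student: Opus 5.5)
The plan has three parts: first identify $A$ as the fiber product by a direct monomial computation, then obtain the trace formula from the fiber-product results of~\cite{KM}, and finally read off the nearly Gorenstein statement.

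\emph{The fiber-product decomposition.} If $i\in C_j$ and $k\in C_l$ with $j\ne l$, then $\{i,k\}\notin\Delta$, so $x_ix_k\in I_\Delta$. By Proposition~\ref{prop:canonical-module}, every standard monomial $x^\alpha$ has $\supp(\alpha)\in\Delta$. A face is a clique in $\Delta^{(1)}$, so it lies in a single connected component. Hence $\mathcal B_\Delta(\mathbf n)$ is the disjoint union of $\{1\}$ and the sets $\mathcal B_{\Delta|_{C_j}}(\mathbf n|_{C_j})\setminus\{1\}$. The latter are exactly the monomial bases of the ideals $\m_{A_j}$. Products of monomials from different components vanish in $A$, and within $C_j$ the relations of $A$ and of $A_j$ coincide, because $I_{\Delta|_{C_j}}$ is generated by the generators of $I_\Delta$ supported in $C_j$. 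So the map $A\to\prod_j A_j$ induced by $x_i\mapsto x_i\in A_j$ for $i\in C_j$ is an injective ring homomorphism onto $\{(b_j):\overline b_1=\cdots=\overline b_t\}$. This identifies $\m_A$ with $\bigoplus_j\m_{A_j}$. No $A_j$ equals $\kk$: there are no ghost vertices, so each $C_j$ is nonempty and $x_i\ne0$ in $A_j$ for $i\in C_j$.

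\emph{The trace formula.} I would induct on $t\ge2$, using the two-factor formula of~\cite{KM}. That formula says: for Artinian local $\kk$-algebras $R,S$, neither equal to $\kk$,
\[
\tr_{R\times_\kk S}(\omega_{R\times_\kk S})=\tau_R\oplus\tau_S,
\]
where $\tau$ is the canonical trace for a non-Gorenstein factor and the maximal ideal for a Gorenstein one. For $t=2$ this is exactly the claim.

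For $t\ge3$, write $A\cong A_1\times_\kk B$ with $B=A_2\times_\kk\cdots\times_\kk A_t$. By the base step applied to $B$, we have $\type(B)=\sum_{j\ge2}\type(A_j)\ge2$ (equivalently, $|\F(\Delta|_{C_2\cup\cdots\cup C_t})|\ge2$ by Proposition~\ref{prop:canonical-module}). So $B$ is not Gorenstein, and by induction $\tr_B(\omega_B)=\bigoplus_{j\ge2}\tau_j$. Plugging this into the two-factor formula gives $\tr_A(\omega_A)=\bigoplus_j\tau_j$.

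I expect the main obstacle to be this induction, namely making sure that the cited result of~\cite{KM} applies in exactly this form: nontrivial factors, and the convention for Gorenstein factors. If it is only available in a weaker form, I would fall back on Lemma~\ref{lem:multigraded-reduction}. By that lemma it suffices to analyze multihomogeneous maps $\varphi:\omega_A\to A$. Since $\omega_A\cong\bigoplus_j\omega_{A_j}/(\text{identified socle-dual }\varepsilon_0)$, such a map restricted to the $C_j$-part lands in $A_j$. One then checks directly that $x_i$ for $i\in C_j$ lies in the trace whenever $A_j$ is Gorenstein; the map sending the generator $\varepsilon_{\alpha(F)}$ for the facet $F=C_j$ to $x_i$-shifted elements would be the first candidate to try.

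\emph{Nearly Gorenstein consequence.} When $t\ge2$, $A$ is not Gorenstein because $\type(A)=|\F(\Delta)|\ge t\ge2$. So $A$ is nearly Gorenstein iff $\bigoplus_j\tau_j=\bigoplus_j\m_{A_j}$, i.e.\ iff $\tau_j=\m_{A_j}$ for every $j$. This holds automatically for Gorenstein $A_j$. For non-Gorenstein $A_j$ it means $\tr_{A_j}(\omega_{A_j})=\m_{A_j}$, which is exactly near Gorensteinness of $A_j$, recalling that Gorenstein rings are nearly Gorenstein. For $t=1$ the equivalence is trivial.
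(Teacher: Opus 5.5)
Your proposal is correct and follows the paper's approach: the paper also treats the decomposition as immediate because a face lies in a single component of $\Delta^{(1)}$, and it takes the trace formula and the nearly Gorenstein equivalence from \cite{KM}*{Theorem~3.20}, where $\tau_j$ is the modified trace $\tr^\dagger$. Your induction via $A\cong A_1\times_\kk B$ is sound, since $B$ is non-Gorenstein by Proposition~\ref{prop:canonical-module}, but it is only needed if one has just a two-factor statement, and the fallback sketch via Lemma~\ref{lem:multigraded-reduction} is not needed at all.
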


\begin{proof}
The algebra decomposition is immediate because a face cannot meet two connected components of the $1$-skeleton. The trace formula and the nearly Gorenstein equivalence are exactly the zero-dimensional case of~\cite{KM}*{Theorem~3.20}; the ideal denoted there by the modified component trace $\tr^\dagger$ is $\tau_j$ above.
\end{proof}

\section{The exact canonical trace}\label{sec:trace-formula}

In this section we prove Theorem~A. We first recall the symmetric-ideal input from~\cite{GHHM}, then introduce the admissible data and illustrate them on a small running example before proving the formula.

For an Artinian monomial algebra, Gasanova--Herzog--Hibi--Moradi associate to every monomial ideal its divisor poset and call the ideal \emph{symmetric} when this poset is self-dual. We use two consequences of their theory: by~\cite{GHHM}*{Corollary~3.5}, the canonical trace is the sum of all symmetric monomial ideals; by~\cite{GHHM}*{Corollary~3.8}, a monomial ideal is symmetric precisely when its minimal monomial generators can be paired bijectively with its socle monomials so that every paired product is the same monomial. This constant-product criterion is the only property of symmetric ideals needed below.

For a nonempty family $\mathcal C\subseteq\F(\Delta)$, put $U_{\mathcal C}=\bigcup_{F\in\mathcal C}F$ and $I_{\mathcal C}=\bigcap_{F\in\mathcal C}F$. If $K\subseteq I_{\mathcal C}$, set $D_F(\mathcal C,K)=K\cup(U_{\mathcal C}\setminus F)$ for $F\in\mathcal C$. We call $(\mathcal C,K)$ \emph{admissible} if $D_F(\mathcal C,K)\in\Delta$ for every $F\in\mathcal C$ and
\[
\mathcal C=\{G\in\F(\Delta):D_F(\mathcal C,K)\subseteq G\text{ for some }F\in\mathcal C\}.
\]
Thus admissibility has two parts: all the sets $D_F$ must be faces, and together they must detect exactly the facets in $\mathcal C$.

\medskip\noindent\emph{Running example.}
Let $\Delta$ have facets $F_1=\{1,2\}$ and $F_2=\{2,3\}$. For $\mathcal C=\{F_1,F_2\}$ we have $U_{\mathcal C}=\{1,2,3\}$ and $I_{\mathcal C}=\{2\}$. Both $(\mathcal C,\varnothing)$ and $(\mathcal C,\{2\})$ are admissible: for $K=\varnothing$ the two faces $D_{F_1}=\{3\}$ and $D_{F_2}=\{1\}$ detect $F_2$ and $F_1$, respectively, while for $K=\{2\}$ they become $F_2$ and $F_1$. The singleton pairs $(\{F_1\},\{1\})$ and $(\{F_2\},\{3\})$ are also admissible, whereas $(\{F_1\},\{2\})$ is not, since the face $\{2\}$ lies in both facets. Theorem~\ref{thm:exact-trace} therefore gives, among its summands, $(x_3^{N_3},x_1^{N_1})$, $(x_2x_3^{N_3},x_1^{N_1}x_2)$, $(x_1)$, and $(x_3)$; in fact the total trace is $(x_1,x_3)$. This example will also make the classification in Section~\ref{sec:classification} transparent: the middle variable $x_2$ does not lie in the trace.

\begin{theorem}\label{thm:exact-trace}
For $A=A_{\Delta,\mathbf n}$,
\[
\tr_A(\omega_A)=
\sum_{(\mathcal C,K)\ \mathrm{admissible}}
\left(x_K\prod_{j\in U_{\mathcal C}\setminus F}x_j^{N_j}:F\in\mathcal C\right)A.
\]
In particular, the admissible data depend only on $\Delta$; the truncation exponents enter only through the powers $N_j=n_j-1$.
\end{theorem}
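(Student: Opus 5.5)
The plan is to prove both inclusions through the constant-product criterion for symmetric ideals \cite{GHHM}*{Corollary~3.8}, together with the fact that the trace is the sum of all symmetric monomial ideals \cite{GHHM}*{Corollary~3.5}. By Proposition~\ref{prop:canonical-module}, the socle monomials of $A$ are the $x^{\alpha(G)}$ with $G\in\F(\Delta)$. A socle monomial lies in a monomial ideal $I$ exactly when some generator of $I$ divides it. So the socle monomials of $I$ are the $x^{\alpha(G)}$ that are divisible by a minimal generator of $I$.

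\emph{Inclusion $\supseteq$.} Fix an admissible pair $(\mathcal C,K)$. Put $g_F=x_K\prod_{j\in U_{\mathcal C}\setminus F}x_j^{N_j}$. Each $g_F$ is a standard monomial, because its support $D_F(\mathcal C,K)$ is a face and its exponents are at most $N_j$.

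First I identify the socle monomials of $J=J_{\mathcal C,K}$. The monomial $g_F$ divides $x^{\alpha(G)}$ exactly when $D_F(\mathcal C,K)\subseteq G$, since every exponent of $g_F$ is at most $N_j$. By the second half of admissibility, the socle monomials of $J$ are therefore exactly the $x^{\alpha(G)}$ with $G\in\mathcal C$.

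Next I check that the $g_F$ are the minimal generators and are pairwise distinct. If $g_F\mid g_{F'}$, then comparing supports outside $K$ gives $U\setminus F\subseteq U\setminus F'$. Since $K\subseteq I_{\mathcal C}$, the $x_K$ part does not affect this comparison, and the exponent bookkeeping then forces $F=F'$.

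Finally I pair $g_F$ with $x^{\alpha(F)}$. Since $K\subseteq F$, the product in the polynomial ring is
\[
g_F\,x^{\alpha(F)}=x_K\prod_{j\in U_{\mathcal C}}x_j^{N_j},
\]
which does not depend on $F$. By the criterion, $J$ is symmetric, and hence $J\subseteq\tr_A(\omega_A)$.

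\emph{Inclusion $\subseteq$.} Let $I$ be a symmetric monomial ideal. Let $u_1,\dots,u_r$ be its minimal generators, paired with its socle monomials $x^{\alpha(F_1)},\dots,x^{\alpha(F_r)}$ so that $u_ix^{\alpha(F_i)}=x^\gamma$ for all $i$. Put $\mathcal C=\{F_1,\dots,F_r\}$ and $U=U_{\mathcal C}$.

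I read off the exponents of $\gamma$ from standardness of $u_i=x^\gamma/x^{\alpha(F_i)}$.
\begin{compactenum}[\rm(a)]
\item For $j\notin U$, $u_i$ has exponent $\gamma_j$ at $j$ for every $i$. Then $\gamma_j>0$ would put $j$ in the support of every generator, so $j$ would lie in the socle monomials of $I$ and hence in some $F_i$, a contradiction. So $\gamma_j=0$.
\item For $j\in U$, choose $F_i\ni j$. Divisibility gives $\gamma_j\ge N_j$.
\item If some $F_{i'}\not\ni j$, then standardness of $u_{i'}$ gives $\gamma_j\le N_j$, so $\gamma_j=N_j$.
\end{compactenum}
Hence $\gamma_j>N_j$ can occur only for $j\in I_{\mathcal C}$. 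Set $\beta_j=\gamma_j-N_j$ for $j\in I_{\mathcal C}$, and let $K=\supp\beta\subseteq I_{\mathcal C}$. Then
\[
u_i=x^\beta\prod_{j\in U\setminus F_i}x_j^{N_j},\qquad \supp u_i=D_{F_i}(\mathcal C,K).
\]
This support is a face because $u_i$ is standard. It also gives $u_i\in J_{\mathcal C,K}$, because $x_K\mid x^\beta$.

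It remains to check admissibility. Suppose a facet $G$ contains some $D_{F_i}$. Then $u_i\mid x^{\alpha(G)}$, because all exponents of $u_i$ are at most $N_j$, so $x^{\alpha(G)}$ is a socle monomial of $I$. Hence $G\in\mathcal C$. Conversely, each $F_i$ contains $D_{F_{i'}}$ for the partner index $i'$ with $u_{i'}\mid x^{\alpha(F_i)}$; such an $i'$ exists because $x^{\alpha(F_i)}\in I$. So $(\mathcal C,K)$ is admissible, and $I\subseteq J_{\mathcal C,K}$.

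The main obstacle is the exponent bookkeeping in the $\subseteq$ direction. It has two delicate parts. The first is ruling out $\gamma_j>0$ for $j\notin U$; the plan is to use that the poset of $I$ is generated up to its socle monomials. The second is showing that the excess exponents $\beta$ sit on $I_{\mathcal C}$. I would also double-check the precise form of the \cite{GHHM} criterion, namely whether the products are taken in the polynomial ring, and adjust the pairing if necessary.
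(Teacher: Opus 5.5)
Your proposal is correct and is essentially the paper's proof. Both directions rest on the same two results of Gasanova--Herzog--Hibi--Moradi. Your exponent analysis (a)--(c) of the common product $x^\gamma$ is the paper's case split over $j\notin U_{\mathcal C}$, $j\in U_{\mathcal C}\setminus I_{\mathcal C}$ and $j\in I_{\mathcal C}$, and the converse uses the same pairing $g_F\leftrightarrow x^{\alpha(F)}$ with constant product $x_K\prod_{j\in U_{\mathcal C}}x_j^{N_j}$.

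On your two caveats: the products are taken in the polynomial ring, exactly as in the paper, since that constant product is in general not a standard monomial. The incomparability of the $g_F$ needs no exponent bookkeeping: $U_{\mathcal C}\setminus F\subseteq U_{\mathcal C}\setminus F'$ gives $F'\subseteq F$, and maximality of facets then forces $F=F'$.
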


\begin{proof}
For $F\in\F(\Delta)$, put $u_F=\prod_{i\in F}x_i^{N_i}$. By the two results from~\cite{GHHM} recalled above, it is enough to classify the symmetric monomial ideals and then sum them.

Let $J$ be symmetric. After indexing the constant-product pairing, write
\[
\operatorname{Gen}(J)=\{v_F:F\in\mathcal C\},\qquad
\operatorname{Soc}(J)=\{u_F:F\in\mathcal C\},
\]
so that $v_Fu_F=w$ is independent of $F$. Put $U=U_{\mathcal C}$ and $I=I_{\mathcal C}$. If $i\in U\setminus I$, some facet in $\mathcal C$ contains $i$ and another does not. Comparing the $x_i$-exponents in $v_Fu_F=w$, and using that a standard monomial has $x_i$-exponent at most $N_i$, gives $\nu_i(w)=N_i$ and hence
\[
\nu_i(v_F)=\begin{cases}0,&i\in F,\\ N_i,&i\notin F.\end{cases}
\]
If $i\in I$, the exponent $\nu_i(v_F)$ is independent of $F$. If $i\notin U$, then $\nu_i(v_F)=0$ for every $F$: otherwise any socle monomial of $J$ divisible by $v_F$ would correspond to a facet in $\mathcal C$ containing $i$, contrary to $i\notin U$. Therefore there is a monomial $x^\alpha$ with $\supp(\alpha)=K\subseteq I$ such that
\[
v_F=x^\alpha\prod_{j\in U\setminus F}x_j^{N_j}.
\]
Since $v_F$ is standard, $D_F(\mathcal C,K)\in\Delta$. A facet $G$ contributes the socle monomial $u_G$ to $J$ exactly when it contains $D_F(\mathcal C,K)$ for some $F\in\mathcal C$. Since the socle monomials of $J$ are exactly the $u_F$ with $F\in\mathcal C$, the pair $(\mathcal C,K)$ is admissible.

Conversely, let $(\mathcal C,K)$ be admissible and set $v_F=x_K\prod_{j\in U_{\mathcal C}\setminus F}x_j^{N_j}$. The $v_F$ are pairwise incomparable, hence are the minimal generators of the ideal they generate. Admissibility says that its socle monomials are exactly the $u_F$ for $F\in\mathcal C$, and
\[
v_Fu_F=x_K\prod_{j\in U_{\mathcal C}}x_j^{N_j}
\]
is independent of $F$. The constant-product criterion therefore makes this ideal symmetric, so it is contained in the trace. Conversely, the preceding classification shows that every symmetric monomial ideal is contained in one of these ideals: replacing the common factor $x^\alpha$ by its squarefree support $x_K$ only enlarges the generated ideal. Summing all symmetric ideals and applying~\cite{GHHM}*{Corollary~3.5} proves the formula.
\end{proof}

\section{The nearly Gorenstein classification}\label{sec:classification}

This section derives Corollary~B from Theorem~A. The local combinatorial step is that a variable in the trace forces the open neighborhood of the corresponding vertex to be a face.

Let $G=\Delta^{(1)}$. For a vertex $i$, write
\[
N_G(i)=\{j\in[m]\setminus\{i\}:\{i,j\}\in\Delta\}
\]
for its open neighborhood.

\begin{proposition}\label{prop:boundary-trace}
Let $C$ be a finite set with $|C|\ge3$, let $n_i\ge2$, and set
\[
B=\kk[x_i:i\in C]/\left(x_i^{n_i}:i\in C,\ \prod_{i\in C}x_i\right).
\]
Then
\[
\tr_B(\omega_B)=\left(x_i^{n_i-1}:i\in C\right)+\left(x_{C\setminus\{i\}}:i\in C\right).
\]
In particular, $B$ is nearly Gorenstein if and only if $n_i=2$ for every $i\in C$.
\end{proposition}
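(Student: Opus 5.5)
We apply Theorem~\ref{thm:exact-trace} to $\Delta=\partial2^C$. Since $|C|\ge3$, every vertex of $C$ is a face, so there are no ghost vertices, and $B=A_{\partial2^C,\mathbf n}$, because $I_{\partial2^C}=(x_C)$. The facets are $F_i=C\setminus\{i\}$ for $i\in C$. The plan is to enumerate all admissible pairs $(\mathcal C,K)$ explicitly, read off the generators $J_{\mathcal C,K}$, and then discard redundant summands. Write $\mathcal C=\{F_i:i\in S\}$ for a nonempty $S\subseteq C$. Then $I_{\mathcal C}=C\setminus S$, and $U_{\mathcal C}$ equals $F_i$ if $S=\{i\}$ and equals $C$ if $|S|\ge2$.

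\emph{Case $|S|=1$, $S=\{i\}$.} Here $U_{\mathcal C}\setminus F_i=\varnothing$, so $D_{F_i}=K\subseteq F_i$. The facets containing $K$ are the $F_j$ with $j\notin K$. Admissibility requires this set to be exactly $\{F_i\}$, which forces $K=C\setminus\{i\}$. This yields the generator $x_{C\setminus\{i\}}$.

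\emph{Case $|S|\ge2$.} Now $D_{F_i}=K\cup\{i\}$ with $K\subseteq C\setminus S$. This is a proper subset of $C$, since it misses the other elements of $S$, so it is a face. The facets containing $K\cup\{i\}$ are the $F_j$ with $j\notin K\cup\{i\}$. Because $|S|\ge2$, the union over $i\in S$ of these families is $\{F_j:j\in C\setminus K\}$. Admissibility requires this to equal $\{F_j:j\in S\}$, so $K=C\setminus S$. The resulting ideal is generated by the monomials $x_{C\setminus S}\,x_i^{N_i}$ for $i\in S$.

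Summing the two cases, the case $S=C$, $K=\varnothing$ contributes $(x_i^{N_i}:i\in C)$. Every other summand with $|S|\ge2$ consists of multiples of these generators, so it is redundant. The singleton case contributes $(x_{C\setminus\{i\}}:i\in C)$. This gives the displayed formula for $\tr_B(\omega_B)$.

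For the nearly Gorenstein statement, first note that $B$ is not Gorenstein. Indeed, by Proposition~\ref{prop:canonical-module} its type is $|C|\ge3$. So $B$ is nearly Gorenstein if and only if $x_i\in\tr_B(\omega_B)$ for every $i\in C$.

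If every $n_i=2$, then $x_i^{N_i}=x_i$ for all $i$, so the trace contains $\m_B$.

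Conversely, suppose $n_i\ge3$ for some $i$. The generators $x_{C\setminus\{j\}}$ have degree $|C|-1\ge2$, the generators $x_j^{N_j}$ with $j\ne i$ are not divisors of $x_i$, and $x_i^{N_i}$ has degree at least $2$. Since $x_i$ is a nonzero standard monomial and the trace is a monomial ideal, $x_i$ would have to be divisible by one of these generators, which is impossible. Hence $x_i\notin\tr_B(\omega_B)$.

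The only delicate step is the admissibility bookkeeping in the case $|S|\ge2$: one must check that the detected family of facets is exactly $\{F_j:j\notin K\}$, and this uses $|S|\ge2$.
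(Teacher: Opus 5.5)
Your proof is correct, but it obtains the trace formula by a different route than the paper. The paper does not invoke Theorem~\ref{thm:exact-trace} here. It quotes the formula for monomial almost complete intersections (Theorem~4.1 of Gasanova--Herzog--Hibi--Moradi with $a_i=n_i$, $b_i=1$). It then reads off the nearly Gorenstein criterion from the same degree observation you make ($x_{C\setminus\{i\}}$ has degree $|C|-1\ge2$). Finally, it adds a remark identifying the all-$n_i=2$ case with Teter's quotient $\widetilde B/\Soc(\widetilde B)$ for $\widetilde B=\kk[x_i:i\in C]/(x_i^2:i\in C)$.

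You instead specialize the paper's exact formula to $\Delta=\partial2^C$ and enumerate admissible pairs. Your bookkeeping is right: $K=C\setminus\{i\}$ for $S=\{i\}$, and $K=C\setminus S$ for $|S|\ge2$, with the latter summands absorbed by the $S=C$ summand $(x_i^{N_i}:i\in C)$. There is no circularity, since Theorem~\ref{thm:exact-trace} rests only on the symmetric-ideal criteria and is proved before this proposition.

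Your route makes the proposition a genuine corollary of the main theorem. It also actually carries out the introduction's claim that Theorem~A recovers the squarefree boundary case of the almost-complete-intersection formula. The paper's route is shorter and serves as an independent cross-check of Theorem~A on this family. It rests on a cited result that covers a broader class of almost complete intersections. Your explicit non-Gorenstein check via $\type(B)=|C|$ is a small point the paper leaves implicit.
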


\begin{proof}
The trace formula is~\cite{GHHM}*{Theorem~4.1} with $a_i=n_i$ and $b_i=1$. Since $|C|-1\ge2$, every generator in the second family has degree at least two, so $x_i$ lies in the trace exactly when $n_i=2$. If all $n_i=2$, let $\widetilde B=\kk[x_i:i\in C]/(x_i^2:i\in C)$. Then $\widetilde B$ is Artinian Gorenstein with one-dimensional socle generated by $x_C$, and $B=\widetilde B/(x_C)=\widetilde B/\Soc(\widetilde B)$. Thus the boundary algebra is exactly the classical quotient-by-the-socle construction of Teter~\cite{Teter}; compare also~\cite{HV}*{Theorem~2.5}.
\end{proof}

\begin{corollary}\label{cor:classification}
Let $\Delta$ be a finite simplicial complex on $[m]$ without ghost vertices and let $n_i\ge2$. Then $A_{\Delta,\mathbf n}$ is nearly Gorenstein if and only if, for every connected component $C$ of $G=\Delta^{(1)}$, one of the following holds:
\begin{compactenum}[\rm(i)]
\item $\Delta|_C=2^C$;
\item $\Delta|_C=\partial2^C$ and $n_i=2$ for every $i\in C$.
\end{compactenum}
\end{corollary}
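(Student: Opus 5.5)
We reduce first to a single connected component. By Proposition~\ref{prop:fiber-product}, $A_{\Delta,\mathbf n}$ is nearly Gorenstein if and only if each $A_j=A_{\Delta|_{C_j},\mathbf n|_{C_j}}$ is nearly Gorenstein. A Gorenstein $A_j$ counts as nearly Gorenstein, and by Proposition~\ref{prop:canonical-module} this happens exactly when $\Delta|_{C_j}=2^{C_j}$. So it suffices to prove the following. For a complex $\Delta$ with connected $1$-skeleton on vertex set $C$ and $\Delta\neq 2^C$, the algebra $A$ is nearly Gorenstein if and only if $\Delta=\partial 2^C$ and all $n_i=2$.

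The sufficiency direction is immediate. A singleton $C$ forces $\Delta=2^C$. If $|C|=2$, the boundary $\partial2^C$ is disconnected, so that case does not arise. Hence $|C|\ge3$. In that case $A_{\partial2^C,\mathbf n}$ is exactly the algebra $B$ of Proposition~\ref{prop:boundary-trace}, which is nearly Gorenstein precisely when all $n_i=2$. The same proposition gives the converse once we know $\Delta=\partial2^C$.

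The main step is the local claim: if $x_i\in\tr_A(\omega_A)$, then $N_G(i)\in\Delta$. We derive it from Theorem~\ref{thm:exact-trace}. Since $x_i$ is a degree-one monomial, it must be a minimal generator of some summand $J_{\mathcal C,K}$, that is, $x_i=x_K\prod_{j\in U\setminus F}x_j^{N_j}$ for some $F\in\mathcal C$.

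There are two cases. In the first, $K=\{i\}$ and $U\setminus F=\varnothing$. Then $U=F$ and $\mathcal C=\{F\}$, and admissibility says that $\{i\}$ lies in a unique facet, namely $F$. In the second, $K=\varnothing$ and $U\setminus F=\{i\}$ with $N_i=1$. Then $D_F=\{i\}$, and admissibility says every facet containing $i$ lies in $\mathcal C$. Every facet of $\mathcal C$ other than $F$ contains $i$, since $U\setminus F=\{i\}$; and $F\cup\{i\}\supseteq U$.

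In the first case every edge $\{i,j\}$ lies in a facet containing $i$, hence in $F$, so $N_G(i)\subseteq F$ and $N_G(i)$ is a face. In the second case every facet $G'\ni i$ lies in $\mathcal C$, so $G'\setminus\{i\}\subseteq U\setminus\{i\}\subseteq F$. Hence $N_G(i)\subseteq F$, again a face. The main obstacle is doing this case analysis carefully, in particular checking that $i\notin F$ in the second case. That holds because $i\in U\setminus F$.

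Now assume $A$ is nearly Gorenstein, so every $x_i$ lies in the trace and every $N_G(i)$ is a face. If the graph $G$ is not complete, connectedness gives an induced path $a-b-c$ with $\{a,c\}\notin\Delta$. But $a,c\in N_G(b)$, which is a face, so $\{a,c\}\in\Delta$, a contradiction. Hence $G$ is complete and $N_G(i)=C\setminus\{i\}$ is a face for every $i$. This means $\partial2^C\subseteq\Delta$. Since $\Delta\ne2^C$, we get $\Delta=\partial2^C$ with $|C|\ge3$, and Proposition~\ref{prop:boundary-trace} forces $n_i=2$ for all $i$.
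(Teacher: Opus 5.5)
Your proposal is correct and follows essentially the same route as the paper. You reduce to a connected component via Proposition~\ref{prop:fiber-product}, then use Theorem~\ref{thm:exact-trace} with the same two cases ($K=\{i\}$, or $K=\varnothing$ and $U_{\mathcal C}\setminus F=\{i\}$) to get $N_G(i)\subseteq F$, deduce that the connected $1$-skeleton is complete, and finish with Proposition~\ref{prop:boundary-trace}. The one step to state explicitly is why the generator dividing $x_i$ cannot be $1$: since $\Delta\ne2^C$, the algebra is non-Gorenstein, so its trace is proper, which is exactly how the paper handles it.
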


\begin{proof}
By Proposition~\ref{prop:fiber-product}, it is enough to treat the connected case. If $A$ is Gorenstein, Proposition~\ref{prop:canonical-module} shows that $\Delta$ has a unique facet; since there are no ghost vertices, $\Delta=2^{[m]}$. We may therefore assume that $A$ is non-Gorenstein and nearly Gorenstein, so $x_i\in\tr_A(\omega_A)$ for every $i$.

Fix $i$. By Theorem~\ref{thm:exact-trace}, a generator $x_K\prod_{j\in U_{\mathcal C}\setminus F}x_j^{N_j}$ attached to an admissible pair $(\mathcal C,K)$ divides $x_i$. Since the trace is proper, this generator is not $1$. Hence either
\[
K=\{i\},\quad U_{\mathcal C}\setminus F=\varnothing,
\]
or
\[
K=\varnothing,\quad U_{\mathcal C}\setminus F=\{i\},\quad N_i=1.
\]
In the first case $\mathcal C=\{F\}$, and admissibility says that $F$ is the unique facet containing $i$. In the second case $i\notin F$ and $U_{\mathcal C}=F\cup\{i\}$. Since $D_F(\mathcal C,\varnothing)=\{i\}$, admissibility puts every facet containing $i$ into $\mathcal C$; any other member of $\mathcal C$ not containing $i$ must equal $F$ by maximality. Thus $\mathcal C=\{F\}\cup\{G\in\F(\Delta):i\in G\}$. In either case every neighbor of $i$ lies in $F$, so $N_G(i)\subseteq F$ and hence $N_G(i)$ is a face of $\Delta$.

Thus every open neighborhood in $G$ is a clique. A connected graph with this property is complete: otherwise, a shortest path $v_0,v_1,v_2,\ldots$ between two nonadjacent vertices would force the edge $\{v_0,v_2\}$ because $v_0,v_2\in N_G(v_1)$. Hence $G$ is complete. Consequently $[m]\setminus\{i\}=N_G(i)$ is a face for every $i$, so every proper subset of $[m]$ is a face. Therefore $\Delta$ is either $2^{[m]}$ or $\partial2^{[m]}$.

The simplex case is an Artinian complete intersection. In the boundary case Proposition~\ref{prop:boundary-trace} shows that $A$ is nearly Gorenstein exactly when all $n_i=2$. The converse follows from the same two cases and Proposition~\ref{prop:fiber-product}.
\end{proof}

\begin{corollary}\label{cor:connected-level}
Assume that $\Delta^{(1)}$ is connected. If $A_{\Delta,\mathbf n}$ is nearly Gorenstein, then it is level.
\end{corollary}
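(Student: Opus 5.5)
By Corollary~\ref{cor:classification}, applied with the single connected component $C=[m]$, a nearly Gorenstein $A_{\Delta,\mathbf n}$ with connected $1$-skeleton falls into one of two cases. Either $\Delta=2^{[m]}$, with arbitrary exponents, or $\Delta=\partial2^{[m]}$ with $n_i=2$ for all $i$. The plan is to check levelness in each case using the criterion at the end of Proposition~\ref{prop:canonical-module}: $A$ is level exactly when $\sum_{i\in F}N_i$ is independent of the facet $F\in\F(\Delta)$.

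In the simplex case there is a unique facet $[m]$. The socle is then one-dimensional, spanned by $\prod_i x_i^{N_i}$, so it is trivially concentrated in one degree and $A$ is level (indeed Gorenstein).

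In the boundary case the facets are the sets $[m]\setminus\{i\}$, and every $N_j=n_j-1=1$. Hence $\sum_{j\in F}N_j=m-1$ for every facet $F$, and the criterion again gives levelness. There is one degenerate situation to note. If $m=1$, then $\partial2^{[1]}=\{\varnothing\}$ and the vertex $1$ would be a ghost vertex, which is excluded. If $m=2$, then $\partial2^{[2]}$ has no edge, so its $1$-skeleton is disconnected, contradicting the hypothesis. So the boundary case forces $m\ge 3$, although the computation above works for any $m$ anyway.

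No step is a real obstacle. The only point needing care is that Corollary~\ref{cor:classification} is applied with the single component equal to $[m]$, which is exactly what connectivity of $\Delta^{(1)}$ provides; the rest is the facet-sum computation.
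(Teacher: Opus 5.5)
Your proposal is correct and follows the paper's proof. You invoke Corollary~\ref{cor:classification} to reduce to two cases: the simplex, which is Gorenstein and hence level, and the square-zero boundary, where every facet has cardinality $m-1$, so the socle sits in degree $m-1$ by Proposition~\ref{prop:canonical-module}; your extra observation that the boundary case forces $m\ge3$ is harmless but not needed.
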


\begin{proof}
By Corollary~\ref{cor:classification}, either $\Delta=2^{[m]}$, in which case $A_{\Delta,\mathbf n}$ is Gorenstein and hence level, or $\Delta=\partial2^{[m]}$ and $n_i=2$ for every $i$. In the latter case every facet has cardinality $m-1$, so Proposition~\ref{prop:canonical-module} shows that the socle is concentrated in degree $m-1$.
\end{proof}

\section{Teter numbers on the nearly Gorenstein locus}\label{sec:teter}

This section computes the Teter number on the locus classified in Corollary~\ref{cor:classification}. After the proof we spell out several concrete examples, including disconnected rings for which one canonical map simultaneously handles several boundary components.

Let $A=A_{\Delta,\mathbf n}$ be non-Gorenstein and nearly Gorenstein, and let $\mathcal S(\Delta)$ be the set of simplex components from the Introduction. Put $q=\max(\{1\}\cup\{|C|:C\in\mathcal S(\Delta)\})$. In general the Teter number is at most $\mu(\tr(\omega_A))$ by~\cite{GHHM}*{Lemma~1.3}; here the connected-component structure determines the exact value.

\begin{theorem}\label{thm:teter-number}
The Teter number of $A$ is $q$.
\end{theorem}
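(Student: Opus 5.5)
The plan is to prove the two inequalities separately: at least $q$ canonical maps are needed, and $q$ suitably chosen maps suffice. By Corollary~\ref{cor:classification}, each connected component $C$ of $\Delta^{(1)}$ is of one of two kinds. Either $\Delta|_C=2^C$, a simplex component, whose factor algebra is $\kk[x_i:i\in C]/(x_i^{n_i})$. Or $\Delta|_C=\partial2^C$ with all $n_i=2$ and $|C|\ge2$, a boundary component. Since $A$ is nearly Gorenstein and not Gorenstein, $\tr_A(\omega_A)=\m_A$. Since $A$ is not Gorenstein, the Teter number is at least $1$. I will use the dual basis description of $\omega_A$ from Proposition~\ref{prop:canonical-module}: $\omega_A$ is generated by the $\varepsilon_{\alpha(F)}$ with $F\in\F(\Delta)$.

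\emph{Lower bound.} Fix a simplex component $C$ with $|C|\ge2$. Let $\varphi_1,\dots,\varphi_s$ be canonical maps whose images sum to $\m_A$. I will pass to $\m_A/\m_A^2$ and project onto the span $V_C$ of the classes of the $x_i$ with $i\in C$, a space of dimension $|C|$.
\begin{compactenum}[(1)]
\item Let $G\ne C$ be a facet. Every $x_j$ with $j\in C$ annihilates $\varepsilon_{\alpha(G)}$, so $\varphi(\varepsilon_{\alpha(G)})\in 0:_A(x_j:j\in C)$.
\item An element of this annihilator has zero $V_C$-component. Its linear part is $\sum_k c_kx_k$, and if $c_k\ne0$ with $k\in C$, multiplying by a suitable $x_j$ leaves a nonzero term: take $j=k$ if $n_k\ge3$, and some $j\in C\setminus\{k\}$ if $n_k=2$. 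This uses $|C|\ge2$ and that $C$ is a simplex.
\item No $\varphi(\varepsilon_\alpha)$ is a unit, because the trace is proper. Hence the image of $\varphi(\omega_A)$ in $V_C$ is spanned by the linear part of the single element $\varphi(\varepsilon_{\alpha(C)})$, so it has dimension at most one.
\end{compactenum}
Therefore $s\ge|C|$, and so $s\ge q$.

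\emph{Upper bound.} For each simplex component $C$, list its vertices as $c_1,\dots,c_{|C|}$. For $t=1,\dots,q$, define $\varphi_t$ on generators as follows.
\begin{compactitem}
\item On a simplex component $C$, set $\varphi_t(\varepsilon_{\alpha(C)})=x_{c_{\min(t,|C|)}}$. Since the factor algebra is Gorenstein, $\omega\cong A\varepsilon_{\alpha(C)}$ there, so this extends to the component.
\item On a boundary component $C$, use Teter's map from the quotient-by-socle description in Proposition~\ref{prop:boundary-trace}: set $\varepsilon_F\mapsto x_{C\setminus F}$ for every face $F\subsetneq C$. This is $A$-linear. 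If $j\in F$, then $x_j\varepsilon_F=\varepsilon_{F\setminus\{j\}}\mapsto x_jx_{C\setminus F}$. If $j\in C\setminus F$, both sides vanish because $x_j^2=0$. Its image contains every $x_i$ with $i\in C$, since $x_i=x_{C\setminus(C\setminus\{i\})}$.
\end{compactitem}

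The main point to check is that these component-wise prescriptions glue to a well-defined $A$-linear map on the fiber product of Proposition~\ref{prop:fiber-product}. Two things must be verified.
\begin{compactitem}
\item The only element shared by all components is $\varepsilon_0$, the dual of $1$. Each prescription sends it to $0$: in a simplex component $x^{\alpha(C)}x_k=0$, and in a boundary component $x_C=0$.
\item A variable from another component kills both the $\varepsilon$'s of a given component and their images, since products of variables from distinct components vanish.
\end{compactitem}
Once this gluing is verified, $\sum_t\varphi_t(\omega_A)$ contains every variable, hence equals $\m_A$. This gives Teter number exactly $q$. The characterization of Teter type is then immediate: $q=1$ exactly when every simplex component is a singleton.
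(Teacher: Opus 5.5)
Your proposal is correct and follows essentially the same route as the paper. For the upper bound, your map $\varepsilon_{\alpha(C)}\mapsto x_{c}$ on a simplex component is exactly the paper's $x_c\vartheta_C$, and you likewise use Teter's complement map on boundary components, extend by zero, and spread the choices over $q$ maps. For the lower bound, you use the same count as the paper: every canonical map contributes at most one dimension to the $C$-part of $\m_A/\m_A^2$, because generators from other components land in the annihilator of $(x_j:j\in C)$. Your gluing check is more explicit than the paper's, and the only slip is cosmetic: boundary components actually have $|C|\ge3$, not merely $|C|\ge2$.
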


\begin{proof}
Write $A_C=A_{\Delta|_C,\mathbf n|_C}$ for a connected component $C$ of $\Delta^{(1)}$. If $C$ is a simplex component, then $A_C$ is an Artinian Gorenstein complete intersection. The complement isomorphism
\[
\vartheta_C:\omega_{A_C}\longrightarrow A_C,\qquad
\varepsilon_\alpha\longmapsto x^{\mathbf N_C-\alpha},
\]
where $\mathbf N_C=(N_i)_{i\in C}$, identifies $\omega_{A_C}$ with $A_C$. For $i\in C$, the map $x_i\vartheta_C$ has image $(x_i)$ and sends $\varepsilon_\varnothing$ to zero. If $C$ is a boundary component, Corollary~\ref{cor:classification} gives $n_i=2$ on $C$, and the complement map $\lambda_C(\varepsilon_F)=x_{C\setminus F}$ for $F\subsetneq C$ (with $x_C=0$) is $A_C$-linear, sends $\varepsilon_\varnothing$ to zero, and has image $\m_{A_C}$.

Extend these maps to $\omega_A$ by zero on dual basis vectors supported in other components. They remain $A$-linear because maximal ideals from different components annihilate one another. Order the vertices of every simplex component. For $1\le\ell\le q$, sum the maps $x_i\vartheta_C$ corresponding to the $\ell$-th vertex of each simplex component with at least $\ell$ vertices; for $\ell=1$, also add all maps $\lambda_C$ from the boundary components. The images of these $q$ maps generate $\bigoplus_C\m_{A_C}=\m_A=\tr_A(\omega_A)$, so the Teter number is at most $q$.

Assume $q\ge2$ and choose a simplex component $C$ with $|C|=q$. Let $\pi_C:A\to A_C$ be the projection. For any $A$-linear map $\varphi:\omega_A\to A$, the restriction of $\pi_C\varphi$ to the copy of $\omega_{A_C}$ has principal image because $A_C$ is Gorenstein. Its image is contained in $\m_{A_C}$ because $\varphi(\omega_A)\subseteq\tr_A(\omega_A)=\m_A$, and therefore it contributes at most one dimension to $\m_{A_C}/\m_{A_C}^2$. Dual basis vectors supported in components different from $C$ are annihilated by $\m_{A_C}$, so their images under $\pi_C\varphi$ lie in $\Soc(A_C)\subseteq\m_{A_C}^2$; here $|C|\ge2$ is used. Hence every canonical map contributes at most one dimension to the $q$-dimensional vector space $\m_{A_C}/\m_{A_C}^2$. At least $q$ maps are required. The case $q=1$ is immediate.
\end{proof}

\begin{example}\label{ex:teter-boundaries}
Suppose every connected component is a boundary component; equivalently, each component $C$ satisfies $\Delta|_C=\partial2^C$ and all exponents on $C$ are two. Then $\mathcal S(\Delta)=\varnothing$, so the Teter number is one. More concretely, if the boundary components are $C_1,\ldots,C_t$, the single map $\lambda_{C_1}+\cdots+\lambda_{C_t}$ has image $\m_A$. Thus a fiber product of arbitrarily many square-zero boundary algebras is still of Teter type.
\end{example}

\begin{example}\label{ex:teter-mixed}
Let $X=\{x_1,x_2\}$ be a simplex component with arbitrary exponents $a_1,a_2\ge2$, and let $Y=\{y_1,y_2,y_3\}$ be a boundary component with square-zero variables. Equivalently,
\[
A=\kk[x_1,x_2,y_1,y_2,y_3]/(x_1^{a_1},x_2^{a_2},y_1^2,y_2^2,y_3^2,y_1y_2y_3,x_i y_j).
\]
Here the mixed products $x_i y_j$ range over $i=1,2$ and $j=1,2,3$. The ring is non-Gorenstein and nearly Gorenstein, $\type(A)=1+3=4$, $\edim(A)=5$, and Theorem~\ref{thm:teter-number} gives Teter number $2$. If $\vartheta_X$ is the complement isomorphism on the simplex component and $\lambda_Y$ the complement map on the boundary component, then the two maps $x_1\vartheta_X+\lambda_Y$ and $x_2\vartheta_X$ already generate $\m_A=\tr_A(\omega_A)$.
\end{example}

\begin{example}\label{ex:teter-large}
For every $r\ge2$ there are nearly Gorenstein algebras in this family with Teter number $r$. Take one simplex component on $r$ vertices and one square-zero boundary component on three vertices. The trace is the maximal ideal, but the simplex component forces $r$ canonical maps. Thus the Teter number is unbounded even inside the nearly Gorenstein locus.
\end{example}

\begin{corollary}\label{cor:teter-type}
For a non-Gorenstein nearly Gorenstein algebra $A_{\Delta,\mathbf n}$, the following are equivalent:
\begin{compactenum}[\rm(i)]
\item $A_{\Delta,\mathbf n}$ is of Teter type;
\item every simplex component of $\Delta$ is a singleton;
\item $\type(A_{\Delta,\mathbf n})=\edim(A_{\Delta,\mathbf n})$.
\end{compactenum}
\end{corollary}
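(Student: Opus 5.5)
The equivalence (i)$\Leftrightarrow$(ii) follows from Theorem~\ref{thm:teter-number}. Being of Teter type means the Teter number is one, that is, $q=1$. Since $q=\max(\{1\}\cup\{|C|:C\in\mathcal S(\Delta)\})$, we have $q=1$ exactly when every simplex component $C\in\mathcal S(\Delta)$ satisfies $|C|=1$. The only remaining work is therefore (ii)$\Leftrightarrow$(iii), which I would prove by computing both invariants component by component, using the classification in Corollary~\ref{cor:classification}.

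Let the connected components of $\Delta^{(1)}$ be $C_1,\ldots,C_t$. By Corollary~\ref{cor:classification}, each is either a simplex component or a boundary component $\partial2^{C}$ with all exponents two.

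\emph{Type.} By Proposition~\ref{prop:canonical-module}, $\type(A)=|\F(\Delta)|$. The facets split along components: a simplex component contributes one facet, $C$ itself, and a boundary component $\partial2^C$ contributes $|C|$ facets, namely $C\setminus\{i\}$ for $i\in C$.

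\emph{Embedding dimension.} Since $n_i\ge2$ and $I_\Delta$ has no linear generators (there are no ghost vertices), the defining ideal lies in $\m^2$. Hence $\edim(A)=m=\sum_j|C_j|$.

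Comparing the two counts gives
\[
\edim(A)-\type(A)=\sum_{C\in\mathcal S(\Delta)}\bigl(|C|-1\bigr),
\]
because the boundary components contribute equally to both sides. Every summand is nonnegative, so the difference vanishes exactly when every simplex component is a singleton. This proves (ii)$\Leftrightarrow$(iii).

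There is one bookkeeping point to confirm. A boundary component needs $|C|\ge2$ for $\partial2^C$ to have no ghost vertices, and then its facets $C\setminus\{i\}$ are genuinely $|C|$ distinct maximal faces. If $|C|=2$, then $\partial2^C$ is two isolated points, which is disconnected and so cannot occur as a single component. Hence every boundary component has $|C|\ge3$, the case covered by Proposition~\ref{prop:boundary-trace}.

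I expect no real obstacle here. The only step needing care is the claim that the facets of $\Delta$ are exactly the disjoint union of the facets of the restrictions $\Delta|_{C_j}$. This holds because a face cannot meet two connected components of the $1$-skeleton, as already used in the proof of Proposition~\ref{prop:fiber-product}.
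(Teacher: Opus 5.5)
Your proposal is correct and follows the paper's proof. You read (i)$\Leftrightarrow$(ii) off Theorem~\ref{thm:teter-number}, and you get (ii)$\Leftrightarrow$(iii) by comparing, component by component, the facet count from Proposition~\ref{prop:canonical-module} with the vertex count; your extra checks ($\edim(A)=m$ because the defining ideal lies in $\m^2$, and every boundary component has at least three vertices) only make explicit what the paper leaves implicit.
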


\begin{proof}
The equivalence of (i) and (ii) is Theorem~\ref{thm:teter-number}. A simplex component contributes one facet and $|C|$ vertices, whereas a boundary component contributes $|C|$ facets and $|C|$ vertices. Proposition~\ref{prop:canonical-module} therefore gives (ii)$\Leftrightarrow$(iii).
\end{proof}

\begin{remark}
Theorem~\ref{thm:teter-number} computes the ordinary Teter number. The maps used to combine different connected components need not have a common degree, so the graded and multigraded Teter numbers may be larger; this distinction already appears in~\cite{GHHM}.
\end{remark}

\section{Powers of the maximal ideal for flag complexes}\label{sec:powers}

This section specializes Theorem~A to flag complexes. The main point is that the same free-face formula proved in~\cite{GHHM}*{Theorem~5.1} for square-zero face algebras remains valid for arbitrary truncation exponents.

\begin{theorem}\label{thm:flag-trace}
If $\Delta$ is flag, then $\tr_A(\omega_A)=(x_F:F\in\Delta\text{ is a free face})A$.
\end{theorem}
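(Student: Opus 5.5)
We prove the two inclusions separately, using Theorem~\ref{thm:exact-trace}.

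\emph{Free faces lie in the trace.} Let $K$ be a free face and $F$ the unique facet containing it. Then $(\{F\},K)$ is admissible. Indeed, $U=I=F$, so $K\subseteq I_{\mathcal C}$ and $D_F(\{F\},K)=K$. This is a face, and the facets containing it are exactly $\{F\}$. The attached ideal is $(x_K)A$, so $x_K\in\tr_A(\omega_A)$. This direction does not use the flag hypothesis.

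\emph{The trace lies in the free-face ideal.} It suffices to show that for every admissible pair $(\mathcal C,K)$ and every $F\in\mathcal C$, the generator $v_F=x_K\prod_{j\in U_{\mathcal C}\setminus F}x_j^{N_j}$ is divisible by $x_H$ for some free face $H$. Since every $N_j\ge1$, it is enough to find a free face $H\subseteq D_F=K\cup(U_{\mathcal C}\setminus F)$. If $v_F=0$ in $A$ there is nothing to prove, but admissibility already gives $D_F\in\Delta$. We split into two cases.

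\emph{Case $|\mathcal C|=1$.} Then $U_{\mathcal C}\setminus F=\varnothing$ and $D_F=K$. Admissibility says $F$ is the unique facet containing $K$, so $K$ itself is free.

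\emph{Case $|\mathcal C|\ge2$.} We claim that $D_F$ itself is free, and this is where flagness is used. Every facet containing $D_F$ lies in $\mathcal C$. Suppose $G\in\mathcal C$ contains $D_F$. Then $G\supseteq U_{\mathcal C}\setminus F$, and $G\supseteq K$ automatically. If $G\ne F$, take $H\in\mathcal C$ arbitrary. We want to show $U_{\mathcal C}\setminus F\subseteq G$ forces a contradiction or forces $G$ unique.

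The cleaner route is as follows. Suppose $G_1\ne G_2$ in $\mathcal C$ both contain $D_F$. Look at vertices of $F\cup G_1$. For $a\in F\setminus G_1$ and $b\in G_1\setminus F$, we have $b\in U\setminus F\subseteq D_F\subseteq G_1$, and we try to show $\{a,b\}$ is an edge. Then flagness makes $F\cup G_1$, or at least $F\cup D_F=F\cup U_{\mathcal C}$, a clique. Concretely, the plan is to show that $U_{\mathcal C}$ is a clique: any two vertices of $U_{\mathcal C}$ both lie in some $D_{F'}$ or some member of $\mathcal C$. Flagness would then make $U_{\mathcal C}$ a face, contradicting maximality of distinct facets in $\mathcal C$ when $|\mathcal C|\ge2$.

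That argument fails in general: the running example has an admissible $|\mathcal C|=2$ pair with $U=\{1,2,3\}$ not a clique. So instead we aim directly for a free face inside $D_F$. Each $j\in U\setminus F$ lies in some $F'\in\mathcal C$. Consider $D_{F'}$ for $F'\ne F$. Using flagness, we show the following: if a face $E=D_F$ lies in two facets, then there is a vertex $j\in E$ whose closed neighborhood in $\Delta|_{U}$ is a clique and hence lies in a unique facet. The intended mechanism is that, in a flag complex, a face $E$ is free iff $E\cup\bigcap_{v\in E}N[v]$ is a clique. For $D_F$, we check that the common neighbors of $D_F$ within $U$ all lie in one facet of $\mathcal C$. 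We then pick $j\in D_F$ and show $\{j\}\cup K$ or a subset of $D_F$ has the same property.

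The main obstacle is this last step: extracting a free subface of $D_F$ in the multi-facet case. We would first test on small flag examples, such as paths, to guess the right subface. In the path example, $D_{F_1}=\{3\}$ is itself free. This suggests proving, via flagness, that $D_F$ is always contained in a unique facet whenever $(\mathcal C,K)$ is admissible with $U_{\mathcal C}\setminus F\ne\varnothing$. The argument would go as follows. If $G_1,G_2\supseteq D_F$, then every vertex of $G_1\cup G_2$ is adjacent to all of $U\setminus F$. Using detection and maximality, this would force $G_1\cup G_2$ to be a clique, hence a face by flagness, so $G_1=G_2$.
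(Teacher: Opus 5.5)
\textbf{There is a genuine gap.} Your reduction through Theorem~\ref{thm:exact-trace} is sound. The inclusion of the free-face ideal in the trace is the paper's argument. For the reverse inclusion, it does suffice to find a free face inside $D_F(\mathcal C,K)$, because $v_F$ has support exactly $D_F$ and every exponent is positive.

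The decisive step is never proved: that $D_F$ is free when $\Delta$ is flag and $|\mathcal C|\ge2$. You drop two attempts and say yourself that this is the main obstacle. Your last paragraph only claims that detection and maximality \emph{would} force $G_1\cup G_2$ to be a clique. The fact you do establish, that every vertex of $G_1\cup G_2$ is adjacent to all of $U_{\mathcal C}\setminus F$, does not suffice. It says nothing about a pair $a\in G_1\setminus G_2$, $b\in G_2\setminus G_1$ with $a,b\notin D_F$, and that is exactly where the clique property could fail.

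The missing observation is short. Let $G$ be any facet containing $D_F$. By the detection half of admissibility, $G\in\mathcal C$, so $G\subseteq U_{\mathcal C}$. Hence
\[
G\setminus D_F\subseteq U_{\mathcal C}\setminus(U_{\mathcal C}\setminus F)=F .
\]
Now let $G_1,\dots,G_s$ be the facets containing $D_F$, with $s\ge1$ since $D_F\in\Delta$. Any two vertices of $\bigcup_i G_i$ are adjacent:
\begin{itemize}
\item if one of them lies in $D_F$, both lie in a common $G_i$;
\item if neither does, both lie in the face $F$.
\end{itemize}
Flagness makes $\bigcup_i G_i$ a face, and maximality of the facets gives $s=1$. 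This also covers $|\mathcal C|=1$, so your case split is unnecessary.

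With this step filled in, your argument is the same clique argument as the paper's. The paper works directly with a minimal generator $x^\beta$ of a symmetric ideal. There, the analogous containment $G_i\setminus\supp(\beta)\subseteq G_0$ comes from the constant-product pairing of~\cite{GHHM}*{Corollary~3.8} rather than from admissibility.
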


\begin{proof}
The inclusion from right to left follows immediately from Theorem~\ref{thm:exact-trace}: if $F$ is contained in the unique facet $G$, then $(\{G\},F)$ is admissible.

For the reverse inclusion, let $J$ be a symmetric monomial ideal and let $x^\beta$ be a minimal generator. Put $F=\supp(\beta)$ and let $G_1,\ldots,G_s$ be the facets containing $F$. By the constant-product criterion~\cite{GHHM}*{Corollary~3.8}, pairing $x^\beta$ with one $u_{G_0}$ forces, for every $i$, a minimal generator $v_i$ paired with $u_{G_i}$ and hence $G_i\setminus F\subseteq G_0$. Thus $G_1\cup\cdots\cup G_s$ is a clique: vertices outside $F$ lie in $G_0$, while a vertex of $F$ is adjacent to every vertex of each $G_i$. Since $\Delta$ is flag, this union is a face, and maximality gives $G_1=\cdots=G_s$. Hence $F$ is free and $x_F\mid x^\beta$. Therefore every symmetric ideal, and hence the whole trace by~\cite{GHHM}*{Corollary~3.5}, is contained in the ideal generated by the free-face monomials.
\end{proof}

\begin{remark}
When $n_1=\cdots=n_m=2$, Theorem~\ref{thm:flag-trace} is exactly~\cite{GHHM}*{Theorem~5.1}. The new point is that the same squarefree generators $x_F$ describe the canonical trace for arbitrary coordinatewise truncation exponents.
\end{remark}

\begin{corollary}\label{cor:flag-powers}
Let $q\ge1$ and suppose that $\Delta$ is flag. Then $\tr_A(\omega_A)=\m_A^q$ if and only if
\begin{compactenum}[\rm(i)]
\item a face $F\in\Delta$ is free exactly when $|F|\ge q$;
\item if $q\ge2$, then $n_1=\cdots=n_m=2$.
\end{compactenum}
\end{corollary}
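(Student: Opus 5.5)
We argue directly from Theorem~\ref{thm:flag-trace}, which writes $T=\tr_A(\omega_A)$ as the ideal generated by the squarefree monomials $x_F$, $F$ a free face. Both $T$ and $\m_A^q$ are monomial ideals. So the equality $T=\m_A^q$ means two things: every standard monomial of degree $q$ lies in $T$, and every minimal generator $x_F$ of $T$ has degree at least $q$.

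\emph{Sufficiency.} Assume (i) and (ii). By (i), every free face has $|F|\ge q$, so $x_F\in\m_A^q$ and $T\subseteq\m_A^q$. For the reverse inclusion take a standard monomial $x^\alpha$ of degree $q$.
\begin{compactenum}[\rm(a)]
\item If $q=1$, then $x^\alpha=x_i$. Every facet is free and has size $\ge1$, and every facet has size at least $q$. By (i), every face of size $\ge1$ is free, in particular $\{i\}$ (there are no ghost vertices). So $x_i\in T$.
\item If $q\ge2$, condition (ii) makes all $N_i=1$, so $x^\alpha=x_F$ is squarefree with $F=\supp(\alpha)\in\Delta$ and $|F|=q$. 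By (i), $F$ is free, so $x^\alpha\in T$.
\end{compactenum}
In both cases $\m_A^q\subseteq T$.

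\emph{Necessity.} Assume $T=\m_A^q$.
\begin{compactenum}[\rm(a)]
\item Every free face has size at least $q$. Suppose $F$ is free with $|F|<q$. Then $x_F$ is a nonzero element of $T$ of degree $<q$. But $\m_A^q$ has no nonzero elements of degree $<q$, a contradiction.
\item Every face of size $\ge q$ is free. If $|F|=q$, then $x_F$ is a nonzero standard monomial of degree $q$, so $x_F\in T$. Hence $x_F$ is divisible by some $x_{F'}$ with $F'$ free and $F'\subseteq F$. A face containing a free face is free, because the facets containing $F$ are among those containing $F'$. So $F$ is free. Any face of size $>q$ contains a subface of size $q$, which is free by the same argument, so it is free as well. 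This gives (i).
\item If $q\ge2$, then all $n_i=2$. Suppose some $n_i\ge3$. Then $x_i^q$ is a nonzero standard monomial only when $N_i\ge q$. Instead take $x_i^2x_H$ with $H$ a face containing $i$ and $|H|=q-1$, which is a standard monomial of degree $q$. This needs such an $H$ to exist, i.e. the facet through $i$ must have size $\ge q-1$.
\end{compactenum}

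\emph{Main obstacle: finding an $H$ and finishing (ii).} If no such $H$ exists, the facets through $i$ are small. They are still free and, by (a), have size $\ge q$. So any facet $G\ni i$ has $|G|\ge q$, and we can choose $H\subseteq G$ containing $i$ with $|H|=q-1$.

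Now $x_i^2x_{H\setminus\{i\}}$ lies in $\m_A^q=T$. Hence it is divisible by some $x_{F'}$ with $F'$ free and $|F'|\ge q$. But its support is $H$, which has size $q-1<q$. This contradiction shows that every $n_i=2$.

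This divisibility argument is the step needing care: it uses that the generators $x_{F'}$ are squarefree, so they divide a monomial only through its support. The remaining steps are routine bookkeeping.
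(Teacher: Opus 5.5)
Your proof is correct and follows the paper's argument essentially step by step. Both directions are read off from the free-face description in Theorem~\ref{thm:flag-trace}, and (ii) is forced by the degree-$q$ standard monomial $x_i^2x_{H\setminus\{i\}}$, with $H$ of size $q-1$ chosen inside a facet through $i$, whose support is too small to contain a free face. The only blemish is the momentary $x_i^2x_H$, which has degree $q+1$, before you switch to the correct monomial $x_i^2x_{H\setminus\{i\}}$.
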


\begin{proof}
Assume first that the two ideals are equal. By Theorem~\ref{thm:flag-trace}, a free face $F$ satisfies $x_F\in\m_A^q$, hence $|F|\ge q$. Conversely, if $|F|\ge q$, then $x_F\in\m_A^q=\tr_A(\omega_A)$. Theorem~\ref{thm:flag-trace} gives a free face $H\subseteq F$; every face containing a free face is free, so $F$ is free.

Suppose $q\ge2$ and $n_i\ge3$ for some $i$. Choose a facet $G$ containing $i$. Since $G$ is free, $|G|\ge q$. Choose $H\subseteq G$ with $i\in H$ and $|H|=q-1$. The standard monomial $x_i^2x_{H\setminus\{i\}}$ has degree $q$, so it belongs to the trace. It cannot be divisible by $x_F$ for a free face $F$, because its support has cardinality $q-1$ whereas every free face has cardinality at least $q$. This contradiction proves (ii).

Conversely, if $q=1$, condition (i) says that every nonempty face is free and the empty face is not. Thus all variables generate the trace and the trace is $\m_A$. If $q\ge2$, condition (ii) makes every standard monomial squarefree. Hence $\m_A^q$ is generated by the $x_F$ with $|F|\ge q$, which are exactly the free-face monomials by (i).
\end{proof}

\begin{corollary}\label{cor:flag-m2}
Suppose that $\Delta$ is flag. Then $\tr_A(\omega_A)=\m_A^2$ if and only if $n_1=\cdots=n_m=2$, every vertex belongs to at least two facets, and any two distinct facets intersect in at most one vertex.
\end{corollary}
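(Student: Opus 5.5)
This follows from Corollary~\ref{cor:flag-powers} with $q=2$. Condition~(ii) there is exactly $n_1=\cdots=n_m=2$. So the work is to show that, for a flag complex, condition~(i) with $q=2$ is equivalent to the two stated combinatorial conditions. Condition~(i) reads: a face $F$ is free exactly when $|F|\ge2$. This splits into three requirements on faces of size $0$, $1$, and at least $2$.

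\emph{Faces of size at least two.} Every face $F$ with $|F|\ge2$ is free exactly when every edge $\{i,j\}\in\Delta$ lies in a unique facet. Indeed, free faces are closed under taking supersets, so the edge condition propagates upward. This is equivalent to: distinct facets $G\neq G'$ satisfy $|G\cap G'|\le1$. If $|G\cap G'|\ge2$, then any edge inside $G\cap G'$ lies in two facets. Conversely, an edge lying in two distinct facets puts two vertices in their intersection.

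\emph{Faces of size zero and one.} A vertex $\{i\}$ is non-free precisely when $i$ lies in at least two facets. This is the stated condition that every vertex belongs to at least two facets.

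The empty face needs care. It must be non-free, which means $\Delta$ has at least two facets. When $m\ge1$, this follows from the vertex condition, since any vertex lies in at least two facets. So it imposes nothing extra. The case $m=0$ is degenerate: then $A=\kk$ and $\m_A=0$. I would either exclude it or note that there $\tr_A(\omega_A)=A\neq0=\m_A^2$, while the vertex and facet conditions hold vacuously, so this case must be set aside.

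Combining the three parts with Corollary~\ref{cor:flag-powers} gives both directions at once. The only subtle step is the empty face, together with the upward-closure remark that reduces ``all faces of size at least two are free'' to the statement about edges.
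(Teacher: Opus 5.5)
Your proposal is correct and follows the paper's proof: specialize Corollary~\ref{cor:flag-powers} to $q=2$, then translate ``free exactly when $|F|\ge2$'' into the vertex condition and the condition that every edge lies in a unique facet, equivalently that distinct facets meet in at most one vertex. You also treat explicitly the empty face, which is automatically non-free once $m\ge1$, and the degenerate case $m=0$, where the right-hand conditions hold vacuously but $\tr_A(\omega_A)=A\neq0=\m_A^2$; the paper leaves both points implicit.
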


\begin{proof}
By Corollary~\ref{cor:flag-powers}, this is equivalent to saying that no face of cardinality at most one is free and every face of cardinality at least two is free. The first condition says that every vertex lies in at least two facets. The second is equivalent to every edge lying in a unique facet, or equivalently to two distinct facets having intersection of cardinality at most one.
\end{proof}

\begin{example}\label{ex:cross-polytope}
Let $\Delta$ be the boundary complex of the $q$-dimensional cross-polytope and take $n_i=2$ for every vertex. The complex is flag, its free faces are exactly its facets, and every facet has cardinality $q$. Therefore $\tr_A(\omega_A)=\m_A^q$. In particular, canonical traces of these Artinian face algebras realize arbitrarily high powers of the maximal ideal.
\end{example}

The general formula also produces non-flag powers. For example, $B=\kk[x,y,z]/(x^3,y^3,z^3,xyz)$ satisfies $\tr_B(\omega_B)=(x^2,y^2,z^2,xy,xz,yz)=\m_B^2$ by Proposition~\ref{prop:boundary-trace}.

\section{One-dimensional square-zero complexes}\label{sec:exact}

The final section shows explicitly what can occur beyond the flag case in dimension one. The only additional linear contribution comes from connected components isomorphic to the triangle $C_3$.

Let $G=(V,E)$ be a nonempty finite simple graph, regarded as the simplicial complex with faces of size at most two, and set
\[
A_G=\kk[x_v:v\in V]/\bigl(x_v^2,\ x_ux_v\ (\{u,v\}\notin E),\ x_ux_vx_w\ (u,v,w\text{ distinct})\bigr).
\]
Thus $A_G$ has basis $1$, the vertex monomials, and the edge monomials, and $\m_{A_G}^3=0$. Write $\m=\m_{A_G}$ and $\deg_G(v)=|N_G(v)|$. For triangle-free $G$, the formula below is the one-dimensional case of~\cite{GHHM}*{Theorem~5.1}; the point is to identify the additional non-flag phenomenon.

\begin{theorem}\label{thm:graph-trace}
If $G=K_1$ or $G=K_2$, then $A_G$ is Gorenstein and $\tr(\omega_{A_G})=A_G$. In every other case,
\[
\tr(\omega_{A_G})=\m^2+(x_v:\deg_G(v)\le1\text{ or the connected component of }v\text{ is isomorphic to }C_3).
\]
\end{theorem}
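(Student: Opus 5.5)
Our plan is to apply Theorem~\ref{thm:exact-trace} with $\Delta=G$ (one-dimensional, no ghost vertices) and all $N_j=1$. Then every generator $x_K\prod_{j\in U_{\mathcal C}\setminus F}x_j$ is squarefree, and the trace is spanned by squarefree monomials of degree $0,1,2$. The Gorenstein cases $K_1,K_2$ follow at once from Proposition~\ref{prop:canonical-module}, since there $\Delta$ is a simplex. In every other case $A_G$ is not Gorenstein, so $1\notin\tr(\omega_{A_G})$. The proof then has two parts: determine which edge monomials lie in the trace, and determine which variables do.

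\emph{Quadratic part.} We claim every edge $e=\{u,v\}$ satisfies $x_ux_v\in\tr(\omega_{A_G})$. For this we take $\mathcal C=\{e\}$ and $K=e$, so $U_{\mathcal C}=I_{\mathcal C}=e$ and $D_e=e$. The only facet containing $e$ is $e$ itself, so the pair is admissible and contributes $(x_ux_v)$. Vertex-squared monomials are zero, so this gives $\m^2\subseteq\tr(\omega_{A_G})$. One caveat: if $G$ has isolated vertices, the facets include singletons, but edges are still facets, so this argument is unaffected.

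\emph{Linear part.} We analyse when $x_i$ lies in the trace exactly as in the proof of Corollary~\ref{cor:classification}. With $N_i=1$ there are two possibilities.
\begin{compactenum}[(a)]
\item $K=\{i\}$ and $\mathcal C=\{F\}$ with $F$ the unique facet containing $i$. This happens exactly when $\deg_G(i)\le1$: the facet is $\{i\}$ if $i$ is isolated, and the unique edge at $i$ if $i$ is a leaf.
\item $K=\varnothing$, $i\notin F$, $U_{\mathcal C}=F\cup\{i\}$ and $\mathcal C=\{F\}\cup\{G'\in\F:i\in G'\}$. Each facet $G'\ni i$ must lie in $F\cup\{i\}$, so $N_G(i)\subseteq F$.
\end{compactenum}
In case (b) we also need admissibility for the other $D_{G'}$. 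For $G'\ni i$ we have $D_{G'}=(F\cup\{i\})\setminus G'$, and this must be a face whose containing facets all lie in $\mathcal C$. Also, $D_F=\{i\}$ must be a face, which it is.

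\emph{Case analysis for (b).} Suppose first $|F|=1$, say $F=\{w\}$. Then $F$ is an isolated vertex, so $N_G(i)\subseteq\{w\}$ forces $N_G(i)=\varnothing$, which is covered by (a). So assume $F=\{a,b\}$ is an edge; then $N_G(i)\subseteq\{a,b\}$.
\begin{compactenum}[(1)]
\item $\deg_G(i)\le1$: already covered by (a).
\item $N_G(i)=\{a,b\}$: the facets through $i$ are $\{i,a\},\{i,b\}$, and $D_{\{i,a\}}=\{b\}$, $D_{\{i,b\}}=\{a\}$. Admissibility requires every facet containing $a$ or $b$ to lie in $\mathcal C=\{\{a,b\},\{i,a\},\{i,b\}\}$. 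Hence $a$ and $b$ have no other neighbours, and the component of $i$ is exactly the triangle $C_3$.
\end{compactenum}
Conversely, for a $C_3$ component this pair is admissible, with generators $x_i$, $x_b$, $x_a$, so all three variables lie in the trace. The same analysis shows that no pair with $\deg_G(i)\ge2$ yields $x_i$ outside the triangle case. This gives the stated formula, and the remaining check is the easy converse admissibility in case (a).

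The main obstacle is completeness of this case analysis: we must be sure that no other admissible pair produces a degree-one generator dividing $x_i$. The reduction in Corollary~\ref{cor:classification} handles this, since a generator dividing $x_i$ must be $x_i$ itself (it cannot be $1$, as the trace is proper). It also remains to confirm the degenerate cases $|V|\le2$, which are $K_1$, $K_2$, and two isolated vertices. In the last case both variables have degree $0$, so the trace is $\m$, in agreement with the formula.
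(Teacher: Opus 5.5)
Your proposal is correct, but it takes a different route from the paper.

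\textbf{Your route.} You derive the statement as a specialization of Theorem~\ref{thm:exact-trace}:
\begin{compactenum}[(1)]
\item The edge monomials come from the admissible pairs $(\{e\},e)$.
\item The linear part follows the neighbourhood analysis in the proof of Corollary~\ref{cor:classification}. You add one refinement: you impose admissibility also at $D_{\{i,a\}}=\{b\}$ and $D_{\{i,b\}}=\{a\}$, and this is what forces the component of $i$ to be a triangle.
\end{compactenum}

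\textbf{The paper's route.} The paper never invokes Theorem~A here; it works directly with the dual basis of $\omega_{A_G}$.
\begin{compactenum}[(1)]
\item For the inclusions it writes down explicit $A_G$-linear maps: $\varepsilon_e\mapsto x_ux_v$ for an edge; $\varepsilon_{\{v,a\}}\mapsto x_v$, $\varepsilon_v\mapsto x_vx_a$ for a leaf; and the complement map on a $C_3$ component.
\item For the exclusion it uses Lemma~\ref{lem:multigraded-reduction} to get a multihomogeneous $\varphi$ with $\varphi(\varepsilon_F)=x_v$. It then reads off $N_G(v)\subseteq F$ from $A_G$-linearity. Finally it rules out a further neighbour $c$ of $a$ via $x_c\varepsilon_{\{a,c\}}=\varepsilon_a$, since a multidegree count forces $\varphi(\varepsilon_{\{a,c\}})=0$.
\end{compactenum}

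\textbf{Comparison.} Your argument is shorter once Theorem~A is available, and it is purely combinatorial. It also shows that the extra triangle contribution is exactly the admissible pair $\bigl(\{\{a,b\},\{i,a\},\{i,b\}\},\varnothing\bigr)$. That pair is the $|C|=3$ square-zero instance of the boundary algebra in Proposition~\ref{prop:boundary-trace}, which makes clear why the triangle is the only non-flag phenomenon in dimension one.

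The paper's argument avoids the symmetric-ideal classification entirely; only the multigraded reduction lemma is used. It exhibits concrete canonical maps realizing each generator. It therefore serves as an independent check of Theorem~A in this case rather than a consequence of it.
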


\begin{proof}
The exceptional graphs are simplices. Assume otherwise. Every edge monomial belongs to the trace: for an edge $e=\{u,v\}$, send $\varepsilon_e$ to $x_ux_v$ and all other dual basis vectors to zero. Hence $\m^2\subseteq\tr(\omega_{A_G})$.

Suppose $x_v$ belongs to the trace. Choose $\varphi(\varepsilon_F)=x_v$ as in Lemma~\ref{lem:multigraded-reduction}. Every neighbor of $v$ lies in $F$, so $\deg_G(v)\le2$. If $N_G(v)=\{a,b\}$, then $F=\{a,b\}$ and $\{a,b\}\in E$. The relations give $\varphi(\varepsilon_b)=x_ax_v$ and $\varphi(\varepsilon_a)=x_bx_v$. If $a$ had a further neighbor $c$, then $x_c\varepsilon_{\{a,c\}}=\varepsilon_a$, while multidegrees force $\varphi(\varepsilon_{\{a,c\}})=0$, a contradiction. The same holds for $b$, so the connected component of $v$ is the triangle $C_3$.

Conversely, an isolated vertex is obtained from the map $\varepsilon_v\mapsto x_v$. If $v$ is a leaf with unique neighbor $a$, use $\varepsilon_{\{v,a\}}\mapsto x_v$ and $\varepsilon_v\mapsto x_vx_a$, with all unspecified values zero. If $C=\{a,b,c\}$ is a connected component isomorphic to $C_3$, the complement map sending $\varepsilon_{C\setminus\{i\}}$ to $x_i$ and $\varepsilon_i$ to $x_{C\setminus\{i\}}$ for $i\in C$, and all other dual basis vectors to zero, is $A_G$-linear. Thus every variable of that triangle component belongs to the trace. This determines the linear part. Since the trace is multigraded and proper by~\cite{HHS}*{Lemma~2.1}, the formula follows.
\end{proof}

\begin{corollary}\label{cor:graph-m2}
One has $\tr(\omega_{A_G})=\m^2$ if and only if every vertex has degree at least two and no connected component is isomorphic to $C_3$. Moreover, $A_G$ is nearly Gorenstein if and only if every connected component is isomorphic to $K_1$, $K_2$, or $C_3$.
\end{corollary}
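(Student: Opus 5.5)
Both assertions follow directly from Theorem~\ref{thm:graph-trace}, by comparing its displayed formula with $\m^2$ and with $\m$. Here $\m^2$ is spanned by the edge monomials and $\m/\m^2$ has basis the classes of the $x_v$. Since the trace is a monomial ideal containing $\m^2$ (outside the Gorenstein cases), it is determined by which variables $x_v$ it contains.

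\emph{First assertion.} Suppose first that $G$ is neither $K_1$ nor $K_2$. Then $\tr(\omega_{A_G})=\m^2$ exactly when no variable $x_v$ lies in the trace. By the theorem, $x_v$ lies in the trace exactly when $\deg_G(v)\le1$ or the component of $v$ is $C_3$. So no variable lies in the trace precisely when every vertex has degree at least two and no component is a triangle. If instead $G=K_1$ or $K_2$, the trace is $A_G\neq\m^2$, since $1\notin\m^2$. Both $K_1$ and $K_2$ contain a vertex of degree at most one, so the criterion correctly fails for them as well.

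\emph{Second assertion.} If $G$ is $K_1$ or $K_2$, then $A_G$ is Gorenstein and hence nearly Gorenstein, and $G$ is a single component of the allowed form. Otherwise the trace is proper and contains $\m^2$, so $A_G$ is nearly Gorenstein exactly when every $x_v$ lies in the trace. That is, every vertex must have degree at most one or lie in a $C_3$ component. A connected graph all of whose vertices have degree at most one is $K_1$ or $K_2$. Hence this condition says exactly that every component is $K_1$, $K_2$, or $C_3$.

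As a consistency check, the second assertion also follows from Corollary~\ref{cor:classification} with all $n_i=2$. The simplices in dimension at most one are $K_1$ and $K_2$, and the only boundary of a simplex that is a graph with at least three vertices is $\partial2^C$ with $|C|=3$, namely $C_3$.

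The only point requiring care is this Gorenstein edge case, since the theorem's formula does not apply to $K_1$ and $K_2$. Everything else is a direct reading of Theorem~\ref{thm:graph-trace}.
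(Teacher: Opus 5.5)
Your proposal is correct and takes the same approach as the paper, which states this corollary without a separate proof as a direct reading of Theorem~\ref{thm:graph-trace}: compare its linear part with $\m^2$ and with $\m$, and treat the Gorenstein cases $K_1,K_2$ separately, exactly as you do. Your cross-check via Corollary~\ref{cor:classification} is also sound; the restriction to ``at least three vertices'' there is automatic, since a boundary component $\partial2^C$ with $|C|\le2$ is excluded by connectivity of $C$ and the absence of ghost vertices.
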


\section*{Acknowledgments}
The author was supported by JSPS KAKENHI Grant Number 25KJ1744. The author used OpenAI's ChatGPT as an interactive aid during the exploratory and editorial stages of this work, including preliminary mathematical brainstorming, literature discovery, language editing, and \TeX{} preparation. All mathematical statements, proofs, and cited sources were subsequently reconstructed and independently verified by the author, who assumes sole responsibility for the manuscript.

\enlargethispage{5\baselineskip}
\begin{singlespace}

\end{singlespace}

\end{document}